\newenvironment{butchertableau}[2][1.25]{\array{#2}}{\endarray}
\newcommand{\myremarkend}{$\spadesuit$} 
\newtheorem{remark}{Remark}
\newtheorem{algodef}{Algorithm}
\newcommand{\rk}{RK}
\newcommand{\erk}{explicit RK}
\newcommand{\Erk}{Explicit RK}
\newcommand{\order}{{\mathcal O}}
\newcommand{\numc}{n_c}
\newcommand{\R}{{\mathbb R}}
\newcommand{\dt}{\Delta t}
\DeclareMathOperator{\Span}{span}
\DeclareMathOperator{\Rank}{rank}
\newcommand{\Kspace}{{\mathcal K}}
\newcommand{\Kop}{{\mathcal K}}
\DeclareMathOperator{\Col}{col}
\newcommand{\mU}{\hat{A}}
\newcommand{\Vtilde}{\widetilde{V}}
\newcommand{\Stilde}{\widetilde{S}}
\newcommand{\Bmat}{B}
\newcommand{\Xmat}{X}
\newcommand{\xsum}{\bar{x}}
\newcommand{\jind}{k}
\newcommand{\mq}{q}
\newcommand{\power}[2]{ {#1}^{\,#2}}
\newcommand{\mA}{\mathcal{A}}
\newcommand{\mD}{\mathcal{D}}
\newcommand{\rkone}[1]{\widehat{#1}}
\newcommand{\rktwo}[1]{\breve{#1}}
\newcommand{\bracChoice}{bmatrix}
\newcommand{\lbrac}{\left[} 
\newcommand{\rbrac}{\right]}
\newif\ifarxiv
\title{Explicit Runge--Kutta Methods that Alleviate Order Reduction%
\thanks{Submitted to the editors DATE.%
\funding{This work was performed under the auspices of the U.S. Department of Energy by Lawrence Livermore National Laboratory under Contract DE-AC52-07NA27344 and was supported by the LLNL-LDRD Program under Project No. 23-ERD-048. LLNL-JRNL-855353.
This material is based upon work supported by the National Science Foundation under Grant No.~DMS--2309728 (Seibold) and DMS--2309727 (Shirokoff). Any opinions, findings, and conclusions or recommendations expressed in this material are those of the authors and do not necessarily reflect the views of the National Science Foundation.}%
}%
}
\author{Abhijit Biswas\thanks{Computer, Electrical, and Mathematical Sciences \& Engineering Division, King Abdullah University of Science and Technology, Thuwal 23955, Saudi Arabia, (\email{abhijit.biswas@kaust.edu.sa}, \email{david.ketcheson@kaust.edu.sa}).}
\and David I. Ketcheson\footnotemark[2]
\and Steven Roberts\thanks{Center for Applied Scientific Computing, Lawrence Livermore National Laboratory, Livermore, California 94550, USA (\email{roberts115@llnl.gov})}
\and Benjamin Seibold\thanks{Department of Mathematics, Temple University, Philadelphia, PA 19122, (\email{seibold@temple.edu}).}
\and David Shirokoff\thanks{Department of Mathematical Sciences, New Jersey Institute of Technology, Newark, NJ 07102, (\email{david.g.shirokoff@njit.edu}).}
}
\begin{document}
\forestset{
  */.style={
    delay+={append={[]},}
  },
  rooted tree/.style={
    for tree={
      grow'=90,
      parent anchor=center,
      child anchor=center,
      s sep=2.5pt,
      if level=0{
        baseline
      }{},
      delay={
        if content={*}{
          content=,
          append={[]}
        }{}
      }
    },
    before typesetting nodes={
      for tree={
        circle,
        fill,
        minimum width=3pt,
        inner sep=0pt,
        child anchor=center,
      },
    },
    before computing xy={
      for tree={
        l=5pt,
      }
    }
  },
  big rooted tree/.style={
    for tree={
      grow'=90,
      parent anchor=center,
      child anchor=center,
      s sep=2.5pt,
      if level=0{
        baseline
      }{},
      delay={
        if content={*}{
          content=,
          append={[]}
        }{}
      }
    },
    edge={
        semithick,
        -Latex
    },
    before typesetting nodes={
      for tree={
        circle,
        fill,
        minimum width=5pt,
        inner sep=0pt,
        child anchor=center,
      },
    },
    before computing xy={
      for tree={
        l=15pt,
      }
    }
  },
  my node/.style={
    circle, fill=#1,
  },
  gray node/.style={
    my node=gray, inner sep=#1mm,
  },
}
\DeclareDocumentCommand\rt{o}{\Forest{rooted tree [#1]}}
\DeclareDocumentCommand\bigrt{o}{\Forest{big rooted tree [#1]}}

\maketitle

\begin{abstract}
Explicit Runge--Kutta (\rk{}) methods are susceptible to a reduction
in the observed order of convergence when applied to initial-boundary
value problem with time-dependent boundary conditions. 
We study conditions on \erk{} methods that guarantee
high-order convergence for linear problems; we refer to these conditions as weak stage order conditions.
We prove a general relationship between the method's order, weak stage order, and number of stages.  
We derive \erk{} methods with high weak stage order and demonstrate, through numerical tests, that they avoid the order reduction phenomenon up to any order for linear problems and up to order three for nonlinear problems.
\end{abstract}

\begin{keywords}
Weak stage order, explicit Runge--Kutta, order reduction
\end{keywords}

\begin{AMS}
    65L04; 65L20; 65M12.
\end{AMS}

\section{Introduction}
When a Runge--Kutta (\rk{}) method is applied to a stiff problem or an initial-boundary value problem (IBVP) with time-dependent boundary conditions, the observed rate of convergence of the solution is often lower than what is predicted by classical analysis.  Instead, the rate of convergence may be given by what is known as the \emph{stage order} of the method.  For many problems, such as hyperbolic PDEs, \erk{} methods are preferred. However, \erk{} methods have stage order equal to (at most) one, so order reduction poses an important challenge. A number of remedies have been provided in the literature, generally taking the form of modified boundary conditions.

Herein we provide an approach that avoids order reduction while imposing boundary conditions in the standard way. It differs from traditional approaches only in that the coefficients of the \rk{} method must be chosen to satisfy a set of additional \emph{weak stage order} conditions. For linear IBVPs, we show that such methods can be designed so as to achieve arbitrarily high order convergence. For nonlinear IBVPs, these methods can attain up to third order accuracy.

\subsection{Previous work on avoiding order reduction for hyperbolic PDEs}
In the context of hyperbolic PDEs, order reduction was studied in \cite{sanz1986convergence,sanz1989stability}.  A simple possibility for avoiding order reduction was noted there, in which one rewrites the original problem as an inhomogeneous PDE with homogeneous boundary conditions.  In \cite{carpenter1995theoretical}, the boundary condition function is differentiated and the resulting equation integrated with the \rk{} method; this approach was shown to avoid order reduction for linear problems, and to provide up to third order convergence for nonlinear problems.  A generalization was later given to avoid order reduction in general for nonlinear problems \cite{abarbanel1996removal}.  These ideas were further developed in \cite{pathria1997correct}, wherein it was suggested to analytically derive formulas for appropriate boundary integration errors rather than computing them with the \rk{} method. Error estimates demonstrating that modified boundary conditions avoid order reduction were also provided in \cite{Alonso2002, AlonsoCano2004}.

However, all of the foregoing modified boundary techniques have a number of drawbacks: they require the availability of derivatives of the boundary conditions; they can reduce the stable time step size; they introduce additional numerical parameters that need to be chosen on a case-by-case basis; and they require a somewhat complicated and intrusive modification of a typical solver code.  Most importantly, they have only been studied and applied in one spatial dimension; their application in higher dimensions is not immediate or straightforward \cite{pathria1997correct}.

\subsection{Previous work on avoiding order reduction through conditions on \rk{} coefficients}
The conditions we refer to here as \emph{weak stage order} (WSO) have been derived previously in various forms under different names.  Scholz \cite{scholz1989order} derived the WSO conditions for singly diagonally implicit \rk{} (SDIRK) schemes up to fourth order\footnote{Formally, Scholz examined the closely related ROW methods; however, when applied to linear problems the conditions are essentially equivalent to those for SDIRK methods.}; he also constructed and tested new schemes, and established order barriers up to order four.  Ostermann and Roche \cite{OstermannRoche1992} later sharpened the work of Scholz by explaining fractional orders of convergence and providing general conditions to avoid order reduction for implicit \rk{} methods applied to linear PDEs. In \cite{OstermannRoche1993}, they proposed closely-related \textit{parabolic order conditions} for Rosenbrock methods.

Subsequent work by Skvortsov provided analysis, examples, and methods for both diagonally implicit \rk{} (DIRK) and \erk{} methods having WSO two or three \cite{skvortsov2006diagonally,skvortsov2010model,skvortsov2017avoid}. In the last of those works, essentially the same concept studied here is referred to as \emph{pseudostage order}. Rang also independently derived conditions equivalent to WSO along with developing and testing new schemes satisfying the conditions, and referring to them as $B_{\rm PR-consistency}$ (\cite{Rang2016}; see equations (12)--(13) therein).  More recent works by some of the present authors and collaborators
took a geometric approach by interpreting the WSO conditions in terms of orthogonal vector spaces \cite{ketcheson2020dirk,rosales2017spatial}. More recently, Rosenbrock methods that avoid order reduction for differential algebraic equations up to order five were also investigated in \cite{Steinebach2023}.

Whereas the previous works on RK methods focused on specific conditions and methods of low to moderate order, here we provide a general theory for explicit schemes that extends to methods of arbitrary order.  The work here can be viewed as an extension of \cite{biswas2022,biswas2023design}, which provided theoretical analysis and practical development of high WSO implicit and DIRK schemes, to \erk{} methods.

\subsection{Weak stage order}
An $s$-stage \rk{} method \cite[pg. 134]{hairer1993solving} is defined by vectors $b, c\in \R^s$, and
matrix $A \in \R^{s\times s}$. We assume throughout this work that $c_i = \sum_j a_{ij}$, so that a method is fully specified by $A$ and $b$.
Given a \rk{} method with coefficients $(A,b)$, we define the spaces
\begin{align}
\label{Y}
Y &:= \Span\left\{b, A^T b, \dots, (A^T)^{s-1} b\right\} \, , \quad \text{and} \\
\label{Km}
K_m &:= \left\{ \tau^{(1)}, A\tau^{(1)},\dots, A^{s-1}\tau^{(1)}, 
            \tau^{(2)}, A\tau^{(2)},\dots, A^{s-1}\tau^{(2)},
            \dots, A^{s-1}\tau^{(m)} \right\} \, ,
\end{align}
with the stage order residuals $\tau^{(k)} := Ac^{k-1} - \frac{1}{k} c^k$.
Note that by construction, $Y$ is the smallest $A^T$-invariant subspace containing $b$, while $K_m$ is the smallest $A$-invariant subspace containing $\tau^{(1)}, \ldots, \tau^{(m)}$.

We define the weak stage order (WSO) of the method (see \cite{ketcheson2020dirk,biswas2022}) as the largest positive integer $q$ such that
$K_q$ is orthogonal to $Y$.
By \cite[Theorem 2]{ketcheson2020dirk}, WSO $q$ is also equivalent to
\begin{equation} \label{eq:WSO_rational}
    0 = b^T (I - z A)^{-1} \tau^{(k)} \,, \qquad \forall z \in \mathbb{C} \,, \quad 1 \leq k \leq q \,.
\end{equation}
If \eqref{eq:WSO_rational} holds for all $k \geq 1$, then the scheme has WSO $q = \infty$; for example the explicit Euler method has $A=0$ and thus has WSO $q=\infty$.
Therefore in the rest of this work
we focus on methods with $p>1$.
Any \erk{} method (satisfying the stage
consistency condition as we assume) has $\tau^{(1)}=0$,
hence $\dim(K_1)=0$.

For many \rk{} schemes, satisfying the WSO conditions turns out to be sufficient for avoiding order reduction entirely on linear problems \cite{sanz1986convergence,OstermannRoche1992,Rang2016}.
A major advantage over the (stronger) stage order conditions \cite{hairer1996solving} is that explicit methods can have arbitrarily high WSO, as show herein.
In the context of PDE approximations, order reduction for a method with order $p$ and stage order $q$ typically reduces the convergence rate to $\min(p,q+1)$ (see e.g.~\cite[Section~II.2.3]{hundsdorfer2003numerical}).  In our tests in \cref{sec:tests} we will see similarly that for a method with order $p$ and WSO $q$, the observed rate of convergence (for linear problems with time-dependent boundary conditions) is $\min(p,q+1)$.  It is therefore natural to devise methods with $q=p-1$. For nonlinear PDEs, numerical evidence suggests WSO conditions partially mitigate order reduction with reduction to order three at worst.

\subsection{Contributions and outline of this work}
This work's main contributions are:
\begin{itemize}
    \item theoretical results providing an understanding of and
        enabling the construction of high WSO \erk{} methods (Thms.~\ref{thm:qm1}--\ref{thm:qm3});
    \item characterization of the linear and nonlinear stability properties of such methods (\cref{sec:structure})
    \item construction of optimally efficient (in terms of number of stages) high WSO \erk{} methods up to order 5 and WSO 5 (\cref{sec:construction});
    \item construction of methods with arbitrarily high order and WSO (\cref{sec:high}); and
    \item demonstration of the methods' effectiveness in preventing or mitigating order reduction on a range of problems (\cref{sec:tests}).
\end{itemize}
This manuscript proceeds as follows. The WSO conditions impose significant restrictions on the coefficients and stability properties of an \erk{} method, as we discuss in \cref{sec:structure}. In \cref{sec:construction} we construct methods with high WSO in various ways. First, in \cref{subsec:conERKslow}, we algebraically derive families of methods with WSO two through five. In \cref{sec:high} we show (by construction) that there exist explicit methods with arbitrarily high WSO. In \cref{sec:implementation} we discuss the efficient implementation of the schemes, and in \cref{sec:tests} selected schemes are tested on a range of numerical examples.  
Technical proofs of the main results are given in \cref{Sec:PfsMainThm}.
Conclusions are discussed in \cref{sec:conclusion}.

Coefficients of all new methods proposed in this work can be found in the supplementary materials and in \cite{ERK_High_WSO_Code}.

\section{Structural restrictions} \label{sec:structure}
In this section we state the main theoretical results of this study; short proofs are included here, while longer technical proofs are deferred to \cref{Sec:PfsMainThm}.

\subsection{Main results}\label{subsec:mainresults}
This section introduces bounds on the weak stage order of \erk{} schemes, and characterizes properties of schemes with a minimal number of stages. We first introduce two lemmas which play a key role in the work, and then state the main results. Let 
\begin{equation*}
R(z) := 1 + \sum_{j=1}^s z^j b^T A^{j-1} e
\end{equation*}
denote the stability function of the method (here $e\in \mathbb{R}^s$ has all entries equal to one).
\begin{lemma} \label{dim-lemma}
Let an \erk{} method with coefficients $(A, b)$, stability function $R(z)$, order of accuracy $p$, and weak stage order $q$ be given and let $Y, K_m$ be defined by \eqref{Y}--\eqref{Km}.
Then $Y$ and $K_q$ are orthogonal, and
\begin{align} \label{dimcond}
 p \le \deg(R(z)) \le   \dim(Y) \le s - \dim(K_q) \, .
\end{align}
\end{lemma}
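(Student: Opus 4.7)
The plan is to establish the four statements in the chain separately, each requiring a different ingredient.

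The orthogonality $Y \perp K_q$ is immediate from the definition of WSO stated just before the lemma: $q$ is (by assumption) the largest integer for which $K_q \perp Y$, so this orthogonality simply records that hypothesis. For the leftmost inequality $p \le \deg(R(z))$, I would invoke the classical fact that a Runge--Kutta method of order $p$ has stability function matching the exponential through order $p$, i.e.\ $R(z) = 1 + z + \tfrac{1}{2}z^2 + \cdots + \tfrac{1}{p!}z^p + O(z^{p+1})$. Since $1/p! \neq 0$, the polynomial $R(z)$ carries a nonzero $z^p$ coefficient and therefore has degree at least $p$.

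The middle inequality $\deg(R(z)) \le \dim(Y)$ is the main technical content, and it is where the explicit structure of the method is essential. Because $A$ is strictly lower triangular, $A^T$ is strictly upper triangular and hence nilpotent, with $(A^T)^s = 0$. Setting $d := \dim(Y)$, the subspace $Y$ coincides with the $A^T$-cyclic subspace generated by $b$, so $d$ equals the degree of the minimal annihilating polynomial of $b$ under $A^T$. Since that minimal polynomial divides $x^s$, it must have the form $x^d$, which gives $(A^T)^d b = 0$. Taking inner products with $e$ then yields
\begin{equation*}
    b^T A^{j-1} e \;=\; \bigl\langle (A^T)^{j-1} b,\, e \bigr\rangle \;=\; 0 \qquad \text{for all } j-1 \ge d,
\end{equation*}
so every coefficient of $z^{d+1}, \dots, z^s$ in $R(z)$ vanishes, and $\deg(R) \le d = \dim(Y)$. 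I expect this step to be the main obstacle, because the analogous bound for a general (non-nilpotent) $A$ would fail; the argument relies crucially on the nilpotency available for explicit schemes.

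The rightmost inequality $\dim(Y) \le s - \dim(K_q)$ is then an immediate consequence of the orthogonality already established: two orthogonal subspaces of $\R^s$ have dimensions summing to at most $s$, so $\dim(Y) + \dim(K_q) \le s$ and the bound follows by rearrangement. Chaining the four pieces produces the stated sequence of inequalities.
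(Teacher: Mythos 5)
Your proposal is correct, and it follows the same chain of inequalities as the paper; the only difference is that the paper disposes of two of the links by citation while you prove them directly. Specifically, the paper obtains the leftmost inequality exactly as you do (from $R(z)-e^z=\order(z^{p+1})$), but it cites an external lemma for $\dim(Y)\le s-\dim(K_q)$ and an external stability-function formula (specialized to $D(z)=1$ for explicit schemes) for $\deg(R)\le\dim(Y)$. Your replacements are sound: the rightmost bound is indeed just the elementary fact that orthogonal subspaces of $\R^s$ intersect trivially, and your argument for the middle bound --- that $Y$ is the $A^T$-cyclic subspace generated by $b$, so nilpotency forces the minimal annihilating polynomial of $b$ to be $x^{\dim Y}$, whence $b^TA^{j-1}e=0$ for $j-1\ge\dim Y$ --- is precisely the explicit-case content of the cited result, and you correctly flag that nilpotency is what makes this step work. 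The net effect is a self-contained proof where the paper's is not; nothing is missing.
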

\begin{proof}
The rightmost inequality is \cite[Lemma~2.1]{biswas2022} and the leftmost inequality holds because for a method of order $p$: $R(z) - e^z = \order(z^{p+1})$.
The middle inequality follows from explicit formulas for the stability polynomial via \cite[Theorem~3.2]{biswas2022} since $D(z) = 1$ for an explicit scheme.
\end{proof}

Any explicit scheme with $p>1$ has $\tau^{(2)}\ne 0$, so $\dim(K_2)\ge 1$.  Nearly all \erk{} methods used in practice have $\deg(R(z))=s$, so Lemma \ref{dim-lemma} implies that they have $q = 1$.
In fact we see immediately from this Lemma that any method with $p>1$ and $q>1$ must have $\deg(R(z))<s$, which implies that at least one of the entries on the first subdiagonal of $A$ must vanish.
Thus, \erk{} methods with high weak stage order require a structure that differs from that of most existing methods. 

We also see from \eqref{dimcond} that, in order to achieve high order $p$ and WSO $q$ with as few stages as possible, it is desirable to minimize the dimension of the space $K_q$.
It turns out that the best one can do is $\dim(K_q)=q-1$.

\begin{lemma}\label{Lem:LBdimK}
    Let an \erk{} method be given with order $p$, weak stage order $q$, and $\numc$ distinct abscissas.  
    If $p>1$ then $q < \numc$. If $q < \numc$ then $\dim K_q \geq q - 1$.    
\end{lemma}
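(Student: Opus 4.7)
The plan for both parts is to translate linear relations among the $\tau^{(k)}$'s into polynomial identities on the abscissas and exploit the strictly lower triangular structure of an ERK scheme. The common setup is the following: for coefficients $\alpha_2,\dots,\alpha_q\in\R$, set $\phi(x) := \sum_{k=2}^q \alpha_k\, x^{k-1}$, a polynomial of degree at most $q-1$ with $\phi(0)=0$, and $\Phi(x):=\int_0^x \phi(t)\,dt$. A direct computation will give
\[
\sum_{k=2}^{q} \alpha_k\,\tau^{(k)} \;=\; A\phi(c) - \Phi(c),
\]
where $\phi(c),\Phi(c)\in\R^s$ are evaluated componentwise. Hence a nontrivial relation $\sum_k\alpha_k\tau^{(k)}=0$ is equivalent to the identity $\sum_{j<i} a_{ij}\,\phi(c_j) = \Phi(c_i)$ holding at every stage $i$, with a corresponding nonzero $\phi$.

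For part (b), I would suppose $q<n_c$ and, for contradiction, $\dim K_q \le q-2$. Since the $q-1$ vectors $\tau^{(2)},\dots,\tau^{(q)}$ lie in $K_q$, they are linearly dependent, producing a nonzero $\phi$ as above with $A\phi(c)=\Phi(c)$. Ordering the distinct abscissas by first appearance as $0 = \gamma_1, \gamma_2, \dots, \gamma_{n_c}$, and letting $i_r$ be the smallest index with $c_{i_r}=\gamma_r$, the equation at stage $i_r$ reads
\[
\Phi(\gamma_r) \;=\; \sum_{s=1}^{r-1} \beta^{(s)}_{i_r}\,\phi(\gamma_s),\qquad \beta^{(s)}_{i_r} := \sum_{j<i_r,\,c_j=\gamma_s} a_{i_r j}.
\]
The target is to establish $\phi(\gamma_r)=0$ for every $r$; since $\deg\phi\le q-1\le n_c-2$, this will force $\phi\equiv 0$, the contradiction. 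The base case $r=1$ is immediate. For the inductive step I would invoke the $A$-invariance $AK_q\subseteq K_q$ together with the bound $\dim K_q\leq q-2$, which forces each $A\tau^{(k)}$ back into $\Span\{\tau^{(j)}\}_{j=2}^q$ and yields additional polynomial identities of the form $A^{\ell}\phi(c) - A^{\ell-1}\Phi(c)\in\Span\{\tau^{(k)}\}$. Combined with the first-appearance equations, these extra identities convert $\Phi(\gamma_r)=0$ into $\phi(\gamma_r)=0$.

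For part (a), I would assume $p>1$ and, for contradiction, $q\ge n_c$. Let $M(x):=\prod_{r=1}^{n_c}(x-\gamma_r)=\sum_{k}\mu_k x^k$, so $M(c)=0$ componentwise and $\mu_0=0$ since $\gamma_1=0$. Iterating $c^k = kAc^{k-1} - k\tau^{(k)}$ will yield
\[
A M'(c) \;=\; \sum_{k=1}^{n_c} k\mu_k\,\tau^{(k)} \;\in\; K_q.
\]
The WSO condition $K_q\perp Y$ will then force $(A^T)^{j+1} b \cdot M'(c) = 0$ for every $j\ge 0$, i.e., $M'(c)\perp A^T Y$. Combining this with the order-$p\ge 2$ conditions $b^T e = 1$ and $b^T c = 1/2$, and noting that $M'(c)\neq 0$ because each $\gamma_r$ is a simple root of $M$, should produce the desired incompatibility.

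The hardest step will be the inductive closing in part (b). The stage-$i_r$ equation only yields $\Phi(\gamma_r)=0$ under the inductive hypothesis, not directly $\phi(\gamma_r)=0$. Promoting a zero of $\Phi$ into a zero of $\phi$ requires exploiting the full $A$-invariance of $K_q$---rather than merely the linear dependence of the $\tau^{(k)}$'s---to extract the additional equations needed to close the induction. Making this propagation precise is where the bulk of the technical work will lie.
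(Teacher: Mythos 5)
Your translation of linear relations among the $\tau^{(k)}$ into the polynomial identity $\sum_k\alpha_k\tau^{(k)}=A\phi(c)-\Phi(c)$ is correct, and reducing the second claim to showing $\phi(\gamma_r)=0$ at every distinct abscissa is a sensible plan, but there is a genuine gap precisely at the step you flag yourself: converting $\Phi(\gamma_r)=0$ into $\phi(\gamma_r)=0$. This is not a technicality to be filled in later; it is the entire content of the lemma. A single dependence $A\phi(c)=\Phi(c)$ among $\tau^{(2)},\dots,\tau^{(q)}$ cannot be refuted by the first-appearance stage equations alone, and your appeal to ``the full $A$-invariance of $K_q$'' is never executed. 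The paper closes exactly this gap with an ingredient your proposal never uses: the nilpotency of $A$. Its Lemma~\ref{spaces-lemma}(c) runs a \emph{downward} induction on the power $i$, anchored at $A^{s}=0$, using $-jA^{i-1}\tau^{(j)}=A^{i-1}(c^{j}-jAc^{j-1})$ to descend from $A^{i}c^{j}\in K_q$ to $A^{i-1}c^{j}\in K_q$; the conclusion is that $K_q$ contains the pure powers $c,c^2,\dots,c^{q-1}$ themselves (not merely $A$ applied to polynomials in $c$), so $\dim K_q$ is bounded below by the rank ($=q-1$, since $c_1=0$) of the Vandermonde-type matrix built from the first $q$ distinct abscissas. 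The paper explicitly remarks that this nilpotent structure is what sharpens the bound relative to the DIRK case, so an argument that does not invoke it is unlikely to close.

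The first claim suffers from the same problem. From $AM'(c)=\sum_k k\mu_k\tau^{(k)}\in K_q$ you only obtain $b^TA^{j+1}M'(c)=0$ for $j\ge0$, i.e.\ orthogonality of $M'(c)$ to $A^TY$ but not to $b$ itself, and the claimed incompatibility with $b^Te=1$ and $b^Tc=\tfrac12$ is asserted rather than derived; I do not see how to extract a contradiction from these ingredients alone. The paper instead applies the same structural lemma with $X=A$: when $q\ge\numc$ one gets $c\in\Span\{c^2,\dots,c^{\numc}\}\subseteq K_q$ (a Vandermonde fact combined with the nilpotent descent), so $c\in K_q\perp Y\ni b$ forces $b^Tc=0$, directly contradicting $b^Tc=\tfrac12$ for $p\ge2$. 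In short: the right target in both parts is to show that $c$ and its powers lie in $K_q$, and the mechanism is the downward induction made possible by $A^s=0$; both are missing from your proposal.
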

\begin{proof}
    For brevity the proof of Lemma~\ref{Lem:LBdimK} is deferred to \cref{Sec:PfsMainThm}.
\end{proof}
For methods of order two or higher, the sum of the order and weak stage order is 
bounded in terms of the number of stages.
\begin{theorem}[Bound on the number of stages]\label{thm:spq}
    Let an \erk{} method with order $p \geq 2$, weak stage order $q$, $s$ stages and $\numc$ distinct abscissa be given. Then $q < \numc$ and moreover
        \begin{align} \label{pqs-bound}
            p + q \le s+1 \, .
        \end{align}    
        If \eqref{pqs-bound} holds with equality then $\dim(K_q) = q-1$ and $\dim(Y)=p$.
\end{theorem}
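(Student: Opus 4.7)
The plan is to assemble the theorem entirely from the two lemmas that precede it, treating it essentially as a bookkeeping exercise that chains together the dimension inequalities already in hand.

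First I would verify the assertion $q < n_c$. This is immediate: since $p \geq 2 > 1$, the first clause of \cref{Lem:LBdimK} applies directly and gives $q < n_c$. This also unlocks the second clause of \cref{Lem:LBdimK}, which yields the lower bound
\begin{equation*}
    \dim(K_q) \geq q - 1.
\end{equation*}

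Next I would establish the main inequality $p + q \leq s + 1$ by combining the bound above with the chain of inequalities from \cref{dim-lemma}, namely
\begin{equation*}
    p \leq \deg(R(z)) \leq \dim(Y) \leq s - \dim(K_q).
\end{equation*}
Substituting $\dim(K_q) \geq q - 1$ into the rightmost term gives $p \leq s - (q-1)$, which rearranges to $p + q \leq s + 1$.

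For the equality characterization, I would simply observe that if $p + q = s + 1$, then every inequality in the composite chain $p \leq \dim(Y) \leq s - \dim(K_q) \leq s - (q-1) = p$ must hold with equality. Reading off the tight bounds yields $\dim(Y) = p$ and $s - \dim(K_q) = s - (q-1)$, i.e.\ $\dim(K_q) = q - 1$.

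There is no genuine obstacle at this layer, since all of the conceptual work has been pushed into \cref{dim-lemma} and \cref{Lem:LBdimK}. The real difficulty is the proof of \cref{Lem:LBdimK}, which the authors have deferred to \cref{Sec:PfsMainThm}; in particular the lower bound $\dim(K_q) \geq q-1$ under the hypothesis $q < n_c$ is the step that makes the theorem sharp and is the nontrivial ingredient.
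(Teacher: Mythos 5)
Your proposal is correct and follows essentially the same route as the paper: the paper's one-line proof likewise combines the lower bound $\dim K_q \geq q-1$ from \cref{Lem:LBdimK} with the inequality chain \eqref{dimcond} of \cref{dim-lemma}, and the equality case is read off by forcing every link in that chain to be tight. Your write-up just makes the bookkeeping (including the derivation of $q < \numc$ from $p>1$) explicit.
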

\begin{proof}
    Combining the lower bound $\dim K_q \geq q - 1$ in Lemma~\ref{Lem:LBdimK} with \eqref{dimcond} yields \eqref{pqs-bound}.
\end{proof}

\begin{remark}
    The bound \eqref{pqs-bound} holds even if $p$ is taken to be the order of the method with respect only to linear, constant-coefficient problems (i.e., if the method is only required to satisfy the tall-tree order conditions). \myremarkend
\end{remark}

The next two theorems provide a way to construct schemes with high WSO $q$, based on implications of Theorem~\ref{thm:spq}.  In them, we refer to the following block structure of $A$ and $c$:
\begin{align}\label{Eq:BlockA}
    A = \begin{\bracChoice}
       0 & 0 & 0 \\
       A_{21} & A_{22} & 0 \\
       A_{31} & A_{32} & A_{33} 
    \end{\bracChoice} , \quad
    c = \begin{\bracChoice}
        0 \\
        c_U \\
        c_L
    \end{\bracChoice} , \quad \textrm{where} \; 
    A_{22} \in \mathbb{R}^{(q-1) \times (q-1)} \, , \; A_{33} \in \mathbb{R}^{(s-q) \times (s-q)} \, . 
\end{align}
In \eqref{Eq:BlockA}, $c_U \in \mathbb{R}^{q-1}$ and $c_L \in \mathbb{R}^{s-q}$. The following Vandermonde-like matrices, defined in terms of $c$, will also play a role
\begin{alignat*}{2}
    V_{U} &= \lbrac\begin{array}{c|c|c|c} 
                    c_U & c_U^2 & \cdots & c_U^{q-1} 
                \end{array}\rbrac \, , \quad & \quad 
    W_{U} &= \lbrac\begin{array}{c|c|c|c} 
                    \tfrac{1}{2} c_U^2 & \tfrac{1}{3} c_U^3 & \cdots & \tfrac{1}{q} c_U^{q} 
                \end{array}\rbrac \, ,   \\
    V_{L} &= \lbrac\begin{array}{c|c|c|c} 
                    c_{L} & c_L^2 & \cdots & c_L^{q-1} 
                \end{array}\rbrac \, , \quad & \quad 
    W_{L} &= \lbrac\begin{array}{c|c|c|c} 
                    \tfrac{1}{2} c_L^2 & \tfrac{1}{3} c_L^3 & \cdots & \tfrac{1}{q} c_L^{q}
                \end{array}\rbrac \, .
\end{alignat*}
\begin{theorem}[Necessary Conditions for WSO $q$]\label{thm:qm1}
Any irreducible \erk{} method with WSO $q$ and $\dim(K_q)=q-1$ must have the following properties:
\begin{enumerate}
    \item[(a)] The first $q+1$ abscissas $c_1, c_2, \dots, c_{q+1}$ are distinct.
    \item[(b)] A unique $L \in \mathbb{R}^{(s-q) \times (q-1)}$ exists that simultaneously satisfies the two Sylvester equations
        \begin{align} \label{Eq:FirstSylvester}
            A_{33} L - L W_{U} V_{U}^{-1} &= \left( A_{33} V_{L} - W_{L} \right) V_{U}^{-1} \, , \\ \label{Eq:SecondSylvester}
            L A_{22} - A_{33} L &= A_{32} \, .
        \end{align}        
    \item[(c)] The vector of weights $b$ admits the following form in terms of the matrix $L$ in part (b):
        \begin{align}\label{Eq:bvector}
                b = \begin{\bracChoice}
                    1 &  0 \\
                    0 & -L^T \\
                    0 &  I 
                \end{\bracChoice} \beta \, , \quad \textrm{for some} \quad \beta \in \mathbb{R}^{s-q + 1} \, .
        \end{align}
    \item[(d)] $a_{q+1,q}=0$.
\end{enumerate}
\end{theorem}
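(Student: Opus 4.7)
The plan is to work exclusively in the block decomposition \eqref{Eq:BlockA}, assembling the stage order residuals and exploiting both the dimension condition $\dim(K_q)=q-1$ and the orthogonality $K_q \perp Y$ from the WSO hypothesis. First I would write each $\tau^{(k)} = (0,\,\tau^{(k)}_U,\,\tau^{(k)}_L)^T$ componentwise, where $\tau^{(k)}_U = A_{22}c_U^{k-1} - c_U^k/k$ and $\tau^{(k)}_L = A_{32}c_U^{k-1} + A_{33}c_L^{k-1} - c_L^k/k$, and assemble them into the matrices $T_U := A_{22}V_U - W_U$ and $T_L := A_{32}V_U + A_{33}V_L - W_L$, so that $[\tau^{(2)}\mid\cdots\mid\tau^{(q)}]$ is the stacked block matrix with middle block $T_U$ and lower block $T_L$.

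The crux of the argument is to show that $T_U$ is invertible. Since $K_q$ contains the $q-1$ residuals $\tau^{(2)},\dots,\tau^{(q)}$ and has dimension exactly $q-1$, any rank deficiency of $T_U$ would produce a nontrivial combination of these residuals supported entirely on the lower block; closing under $A$ via the nilpotent $A_{33}$ (inherited from strict lower-triangularity of $A$) together with irreducibility would then force a dimension contradiction. Invertibility of $T_U$ forces invertibility of $V_U$, so $c_U$ has distinct nonzero entries, yielding distinctness of $c_1,c_2,\dots,c_q$. The distinctness of $c_{q+1}$ would then follow from irreducibility combined with the Sylvester structure derived below: if $c_{q+1}=c_i$ for some $i\leq q$, \eqref{Eq:SecondSylvester} would force row $q+1$ of $A$ to duplicate row $i$, contradicting irreducibility.

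With $T_U$ invertible, the remainder is largely algebraic. Define $L := T_L T_U^{-1}\in\R^{(s-q)\times(q-1)}$. The WSO condition $b\perp K_q$ reads $b_U^T T_U + b_L^T T_L = 0$, so $b_U = -L^T b_L$ with $b_1$ and $b_L$ free, which is exactly \eqref{Eq:bvector}, proving (c). For (b), the $A$-invariance of $K_q$ combined with $\dim(K_q)=q-1$ gives a matrix $C\in\R^{(q-1)\times(q-1)}$ with $A[\tau^{(2)}\mid\cdots\mid\tau^{(q)}] = [\tau^{(2)}\mid\cdots\mid\tau^{(q)}]\,C$. Reading off the middle block gives $A_{22}T_U = T_UC$, so $C = T_U^{-1}A_{22}T_U$; reading off the lower block and substituting reduces to $LA_{22} - A_{33}L = A_{32}$, which is \eqref{Eq:SecondSylvester}. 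Using this identity on the right-hand side of $LT_U = T_L$ yields $A_{33}LV_U - LW_U = A_{33}V_L - W_L$, which, after right-multiplying by $V_U^{-1}$, is \eqref{Eq:FirstSylvester}. Uniqueness of $L$ in the coupled system is then standard Sylvester theory applied to \eqref{Eq:FirstSylvester}: $A_{33}$ is nilpotent (spectrum $\{0\}$) and $W_U V_U^{-1}$ is invertible (since $c_U$ has distinct nonzero entries), so their spectra are disjoint.

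Part (d) then drops out by reading off the $(1,q-1)$ entry of \eqref{Eq:SecondSylvester}: strict lower-triangularity of $A$ makes the last column of $A_{22}$ vanish (so $(LA_{22})_{1,q-1}=0$) and the first row of $A_{33}$ vanish (so $(A_{33}L)_{1,q-1}=0$), forcing $a_{q+1,q} = (A_{32})_{1,q-1} = 0$. The main obstacle I anticipate is the two pieces of (a): proving $T_U$ invertible from $\dim(K_q)=q-1$, and the distinctness of $c_{q+1}$. Both rely on a delicate interplay between the dimension equality, the nilpotent block action of $A$, and irreducibility, and I expect these to consume most of the technical work in \cref{Sec:PfsMainThm}.
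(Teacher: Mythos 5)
Your handling of parts (b)--(d) essentially reproduces the paper's algebra: once one knows that $K_q$ is the column space of a matrix of the form $\left[\begin{smallmatrix}0\\ I\\ L\end{smallmatrix}\right]$, the $A$-invariance of $K_q$ gives \eqref{Eq:SecondSylvester}, membership of the residuals $\tau^{(j)}$ in $K_q$ gives \eqref{Eq:FirstSylvester}, orthogonality of $Y$ (hence $b$) to $K_q$ gives \eqref{Eq:bvector}, the uniqueness of $L$ follows from the disjoint spectra of the nilpotent $A_{33}$ and the invertible $W_U V_U^{-1}$, and the $(1,q-1)$ entry of \eqref{Eq:SecondSylvester} gives (d). All of this matches \cref{Sec:PfsMainThm}.

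The gap is precisely where you flag it, and the route you propose around it does not close. You want to prove that $T_U := A_{22}V_U - W_U$ is invertible and then build everything on $L := T_L T_U^{-1}$ and on deducing $b_U = -L^T b_L$ from $b_U^T T_U + b_L^T T_L = 0$. But the hypothesis $\dim(K_q)=q-1$ does not obviously force $T_U$ to be invertible: $K_q$ is the $A$-invariant \emph{closure} of the residuals, so the residuals themselves could be linearly dependent (i.e., $T_U$ rank-deficient) while the iterates $A^i\tau^{(j)}$ still fill out a $(q-1)$-dimensional space; your one-sentence ``dimension contradiction'' does not rule this out, and every later step of your argument leans on this unproved invertibility. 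The paper never needs $T_U$ invertible --- only $V_U$ --- and it obtains the graph representation of $K_q$ from a different mechanism that is absent from your sketch: Lemma~\ref{spaces-lemma} applied to the upper-left block $\mU$ of \eqref{Eq:Upperblock} yields the sandwich $q-1=\Rank V_q(\mU)\le\dim\Kspace(\mU,V_q(\mU))\le\dim K_q=q-1$, forcing $\Col V_q(\mU)$ to be $\mU$-invariant, and Lemma~\ref{lem:c-distinct}/Corollary~\ref{cor-reducible} show that $\mU$-invariance of a Vandermonde column space together with a repeated abscissa implies S-reducibility, so irreducibility forces $c_1,\dots,c_q$ distinct. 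Your argument for $c_{q+1}$ is also not correct as stated: S-reducibility concerns equality of row sums of $A$ over partition classes, not literal duplication of rows, and \eqref{Eq:SecondSylvester} does not produce such a duplication; in the paper the distinctness of $c_{q+1}$ falls out of the definition of $\hat{s}$ as the \emph{largest} index for which the leading abscissas contain only $q$ distinct values. To complete your proof you would need either a correct direct proof that $T_U$ is nonsingular under the stated hypotheses, or (as the paper does) the Vandermonde-invariance-implies-reducibility lemma.
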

In addition to the necessary conditions in Theorem~\ref{thm:qm1} we have the following related (but separate) sufficient conditions. 
\begin{theorem}[Sufficient Conditions for WSO of at least $q$]\label{thm:qm2} 
    An \erk{} scheme $(A,b)$, with a pair $(L, \beta)$ satisfying the conditions stated in Theorem~\ref{thm:qm1}(a--d) has WSO at least $q$.
\end{theorem}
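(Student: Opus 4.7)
The plan is to exhibit an $A$-invariant subspace of $\R^s$ that contains the residuals $\tau^{(1)}, \ldots, \tau^{(q)}$ and is orthogonal to $b$. Concretely, I would introduce
\begin{equation*}
V := \Bigl\{ \begin{\bracChoice} 0 \\ \xi \\ L \xi \end{\bracChoice} \,:\, \xi \in \R^{q-1} \Bigr\} \subset \R^s,
\end{equation*}
where $L$ is the matrix from condition (b). The payoff is immediate: once $V$ is shown to be $A$-invariant, to contain each $\tau^{(k)}$ for $k=1,\ldots,q$, and to satisfy $b^T v = 0$ for all $v \in V$, minimality of $K_q$ forces $K_q \subseteq V$, and then the identity $\langle (A^T)^i b,\, A^j \tau^{(k)} \rangle = \langle b,\, A^{i+j}\tau^{(k)} \rangle = 0$ together with the $A$-invariance of $K_q$ yields $Y \perp K_q$, proving WSO at least $q$ by the characterization in \eqref{eq:WSO_rational}.

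First I would unpack the stage-order residuals using the block structure \eqref{Eq:BlockA} and $c_1=0$: for $k \geq 2$, the first entry of $\tau^{(k)}$ vanishes, the middle block reads $\tau^{(k)}_U = A_{22} c_U^{k-1} - \tfrac{1}{k} c_U^k$, and the lower block reads $\tau^{(k)}_L = A_{32} c_U^{k-1} + A_{33} c_L^{k-1} - \tfrac{1}{k} c_L^k$. These are exactly the $(k-1)$th columns of $A_{22} V_U - W_U$ and $A_{32} V_U + A_{33} V_L - W_L$, respectively. Next, I would prove $\tau^{(k)}_L = L \tau^{(k)}_U$ for $k = 2, \ldots, q$ by reading off columns of the Sylvester equations: clearing $V_U^{-1}$ in \eqref{Eq:FirstSylvester} (legitimate because (a) makes $V_U$ invertible) gives $A_{33} L V_U - L W_U = A_{33} V_L - W_L$, whose $(k-1)$th column is $A_{33}(L c_U^{k-1} - c_L^{k-1}) = \tfrac{1}{k}(L c_U^k - c_L^k)$; combining this with $L A_{22} c_U^{k-1} = A_{33} L c_U^{k-1} + A_{32} c_U^{k-1}$ from \eqref{Eq:SecondSylvester} produces the desired identity after a short rearrangement.

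The remaining pieces should be one-line block calculations. The $A$-invariance of $V$ follows from \eqref{Eq:SecondSylvester}: for $v = [0,\xi^T,(L\xi)^T]^T$, the bottom block of $Av$ equals $(A_{32} + A_{33} L)\xi = L A_{22} \xi$, so $Av \in V$ with parameter $A_{22}\xi$. Orthogonality $b \perp V$ follows directly from \eqref{Eq:bvector}: writing $\beta = (\beta_0, \beta_1^T)^T$ with $\beta_1 \in \R^{s-q}$ gives $b = (\beta_0, -\beta_1^T L, \beta_1^T)^T$, so $b^T v = -\beta_1^T L \xi + \beta_1^T L \xi = 0$. I anticipate the main obstacle to be the column-by-column bookkeeping in step two: matching indices in $V_U, V_L, W_U, W_L$ against the correct powers $c^{k-1}$ and $c^k$ is mechanical but easy to slip on. Condition (d), interestingly, does not appear to be needed in this sufficiency direction---it is part of the structural picture forced by Theorem~\ref{thm:qm1}, but $K_q \perp Y$ seems to follow already from (a)--(c).
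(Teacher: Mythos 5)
Your proof is correct and follows essentially the same route as the paper: your space $V$ is exactly the paper's $W = \Col\begin{\bracChoice} 0 & I & L^T \end{\bracChoice}^T$, and the three verifications ($A$-invariance from \eqref{Eq:SecondSylvester}, containment of $\tau^{(2)},\ldots,\tau^{(q)}$ by reading off columns of \eqref{Eq:FirstSylvester}, orthogonality to $b$ from \eqref{Eq:bvector}) are the same steps the paper carries out via \eqref{Eq:BlockTau} and \eqref{Eq:Step1FirstEq}. Your side remark that condition (d) is not separately needed is also consistent with the paper, where $a_{q+1,q}=0$ is derived as a consequence of the solvability of \eqref{Eq:SecondSylvester} rather than used in the sufficiency argument.
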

The proofs of Theorems~\ref{thm:qm1} and \ref{thm:qm2} are technical and deferred to \cref{Sec:PfsMainThm}.  These theorems provide a systematic way to construct schemes with WSO of at least $q$.  

The fact that there is a \emph{gap} between the necessary and sufficient conditions in Theorem~\ref{thm:qm1} is removed when the stage bound \eqref{pqs-bound} is satisfied with equality.

\begin{theorem}[Schemes with minimum number of stages]\label{thm:qm3}
An \erk{} method with $s$ stages and order $p \geq 2$ has WSO $s + 1 - p$ if and only if the conditions stated in Theorem~\ref{thm:qm1}(a--d) hold with $q = s - p + 1$ therein. Furthermore, such schemes are irreducible and the stability function of the method is the $p$-th partial sum of the exponential, i.e., $R(z) = \displaystyle\sum_{j=0}^p \frac{z^j}{j!}$.
\end{theorem}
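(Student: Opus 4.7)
The plan is to leverage the three previously stated results---the stage bound in Theorem~\ref{thm:spq}, the necessary conditions in Theorem~\ref{thm:qm1}, and the sufficient conditions in Theorem~\ref{thm:qm2}---to pin WSO down to exactly $q = s + 1 - p$ and then read off the structural and spectral consequences.

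For the ``if'' direction, assume conditions (a)--(d) of Theorem~\ref{thm:qm1} hold with $q = s - p + 1$. Theorem~\ref{thm:qm2} immediately gives that the scheme has WSO at least $s - p + 1$, while Theorem~\ref{thm:spq} applied with order $p$ and the actual WSO of the scheme caps WSO at $s + 1 - p$. Hence WSO equals $s + 1 - p$ exactly.

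For the ``only if'' direction, assume the scheme has order $p \geq 2$ and WSO $q = s + 1 - p$. Theorem~\ref{thm:spq} already yields $\dim K_q = q - 1$ and $\dim Y = p$. To invoke Theorem~\ref{thm:qm1} I must first establish that the scheme is irreducible---this is also the extra structural claim of the ``furthermore'' clause. I would argue by contradiction: if the scheme reduces to an equivalent \erk{} with $s' < s$ stages, the reduced scheme retains order $p$, and since the rational function in \eqref{eq:WSO_rational} characterizing WSO is an invariant of the input-output map $u \mapsto b^T(I - zA)^{-1} u$, it also retains WSO at least $q$. Then Theorem~\ref{thm:spq} applied to the reduced scheme yields $p + q \leq s' + 1 < s + 1 = p + q$, a contradiction. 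With irreducibility established, Theorem~\ref{thm:qm1} delivers (a)--(d).

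Finally, in either direction we end with $p + q = s + 1$, $\dim Y = p$, and $\dim K_q = q - 1$. Substituting into Lemma~\ref{dim-lemma} gives
\begin{equation*}
p \leq \deg R(z) \leq \dim Y = p,
\end{equation*}
so $\deg R = p$. Order $p$ forces $R(z) - e^z = \order(z^{p+1})$, and a polynomial of degree $p$ matching this expansion is uniquely $R(z) = \sum_{j=0}^{p} z^j / j!$. I expect the principal obstacle to lie in the irreducibility step: the invariance of WSO under each standard reduction move (deleting a stage with vanishing weight and vanishing column entries, merging identical rows, and so on) requires case-by-case bookkeeping. A cleaner alternative would be to go inside the proof of Theorem~\ref{thm:qm1} and verify that the equality case $\dim K_q = q - 1$ makes the irreducibility hypothesis redundant, deriving (a)--(d) first and then reading irreducibility off from the rigidity of the block forms prescribed by (b) and (c).
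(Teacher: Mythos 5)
Your proposal is correct and follows essentially the same route as the paper: Theorem~\ref{thm:qm2} combined with the bound \eqref{pqs-bound} for the ``if'' direction, Theorem~\ref{thm:spq} plus an irreducibility argument (a reduced scheme with fewer stages but the same $p$ and $q$ would violate \eqref{pqs-bound}) to invoke Theorem~\ref{thm:qm1} for the ``only if'' direction, and Lemma~\ref{dim-lemma} to pin down $\deg R = p$ and hence $R(z) = \sum_{j=0}^p z^j/j!$. The paper's proof is simply a terser version of the same argument; it likewise asserts without detailed bookkeeping that reduction preserves $p$ and $q$, the step you flag as the main residual concern.
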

\begin{proof}
If an $s$-stage scheme with order $p \geq 2$ satisfies the algebraic conditions from Theorem~\ref{thm:qm1}(a--d) for $q = s - p + 1$, then by Theorem~\ref{thm:qm2} it has WSO $q' \geq q$. According to inequality \eqref{pqs-bound}, $q' \leq s - p + 1 = q$, whence $q' = q$. 
\quad\quad
Conversely, if the scheme has WSO $q = s + 1 - p$, then Theorem~\ref{thm:spq} implies $\dim K_q = q - 1$ and $\dim Y = p$. Moreover, according to inequality \eqref{pqs-bound}, no scheme can exist with the same values $p$ and $q$ but smaller $s$, so the scheme must be irreducible. Hence the scheme satisfies the hypothesis of Theorem~\ref{thm:qm1}. Inequality \eqref{dimcond} then implies $\deg(R(z)) = p$, which restricts $R(z)$ to be the $p$-th partial sum of the exponential.
\end{proof}

\subsection{Strong stability preservation}\label{subsec:ssp}
\Erk{} methods are widely used when solving nonlinear hyperbolic PDEs; commonly used methods in this context are \emph{strong stability preserving} (SSP) \rk{} methods. Unfortunately, we here show that the SSP property (for nonlinear problems) is incompatible with high weak stage order.
In the proof of the following theorem, we make use of the fact that if two vectors are entry-wise non-negative and their inner product vanishes, then they have disjoint support.  In particular, this implies
\begin{alignat*}{4}
    \textrm{(Non-negativity 1)}&& \quad\quad\quad\quad  & \textrm{If} \; u \geq 0 \; \textrm{and} \; u^T e = 0 \;   &\quad&\Longrightarrow &\quad       u &= 0\;, \\
    \textrm{(Non-negativity 2)}&& \quad\quad\quad\quad  & \textrm{If} \; u \geq 0 \; \textrm{and} \; u^T c^2 = 0 \; &\quad&\Longrightarrow &\quad  u^T c &= 0 \;.
\end{alignat*}

\begin{theorem}[Incompatibility of non-negative coefficients and WSO]\label{thm:positive_ceofficients}
    An \erk{} \; method with non-negative coefficients $A \geq 0$, $b \geq 0$, satisfies one of the following:
    \begin{enumerate}
        \item [(i)] WSO $q = 1$; or
        \item [(ii)] Order $p \leq 1$ and WSO $q = \infty$.       
    \end{enumerate}        
\end{theorem}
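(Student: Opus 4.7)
The plan is to argue by contraposition: assume WSO $q \geq 2$ and derive $p \leq 1$ together with WSO $=\infty$. The key machinery is to apply the two positivity observations (Non-negativity~1 and Non-negativity~2) to the ``adjoint weights'' $\omega^{(j)} := (A^T)^j b$, which are non-negative since $A \geq 0$ and $b \geq 0$. I will also use stage consistency $c = Ae \geq 0$ and explicitness $A^s = 0$.

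First, I would unpack WSO $\geq 2$ as the scalar recurrence $b^T A^{j+1} c = \tfrac{1}{2} b^T A^j c^2$ for all $j \geq 0$, which is exactly $b^T A^j \tau^{(2)} = 0$ with $\tau^{(2)} = Ac - \tfrac{1}{2} c^2$. Setting $u_j := b^T A^j c = (\omega^{(j)})^T c$ and $v_j := b^T A^j c^2 = (\omega^{(j)})^T c^2$, the recurrence reads $u_{j+1} = \tfrac{1}{2} v_j$. Since $A^s = 0$, one has $u_{s-1} = b^T A^s e = 0$, giving the base case $v_{s-2} = 0$. The main step is then a backward induction: if $v_j = 0$, then because $\omega^{(j)} \geq 0$ and $c \geq 0$, Non-negativity~2 applied to $(\omega^{(j)})^T c^2 = 0$ produces $u_j = (\omega^{(j)})^T c = 0$; the recurrence $u_j = \tfrac{1}{2} v_{j-1}$ then forces $v_{j-1} = 0$. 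Iterating down to $j = 0$ yields $b^T c^2 = 0$, and one further application of Non-negativity~2 (with $\omega^{(0)} = b$) delivers $b^T c = 0$. Since an order-$2$ method requires $b^T c = \tfrac{1}{2}$, this proves $p \leq 1$.

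To finish, observe that $b^T c^2 = 0$ together with $b, c \geq 0$ yields the pointwise identity $b_i c_i = 0$, and because $a_{ij} \geq 0$ with $c_i = \sum_j a_{ij}$, each $i$ with $b_i > 0$ must have the entire $i$th row of $A$ vanishing. Hence $A^T b = 0$, so $Y = \Span\{b\}$. It remains to verify $b^T \tau^{(k)} = 0$ for every $k \geq 1$: the first summand in $b^T \tau^{(k)} = b^T A c^{k-1} - \tfrac{1}{k} b^T c^k$ vanishes because $b^T A = 0$, and $b^T c^k = \sum_i b_i c_i^k = 0$ because $b_i c_i = 0 \Rightarrow b_i c_i^k = 0$; moreover $b^T A^j \tau^{(k)} = 0$ is immediate for $j \geq 1$ from $A^T b = 0$. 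So $Y \perp K_k$ for all $k$, i.e., WSO $= \infty$. The main obstacle is orchestrating the induction in the correct direction: the WSO relation only propagates $v_j$ forward into $u_{j+1}$, so one must start from the top ($u_{s-1} = 0$ by nilpotency) and at each step use Non-negativity~2 to ``flip'' $v_j = 0$ back to $u_j = 0$ before the recurrence is permitted to step one index lower.
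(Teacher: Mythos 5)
Your proposal is correct and follows essentially the same route as the paper's proof: a backward induction anchored at the nilpotency of $A$, using the WSO-2 condition $b^TA^j\tau^{(2)}=0$ together with the non-negativity/disjoint-support observations to push $b^TA^jc=0$ down to $j=0$, and then concluding $b^Tc=0$ (so $p\le 1$) and $b^TA=0$ (so $q=\infty$). The only cosmetic difference is that you propagate the scalars $b^TA^jc$ and $b^TA^jc^2$ while the paper propagates the full vector identity $b^TA^{k}=0$ via its Non-negativity~1 step; both arguments are sound and equivalent in substance.
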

\begin{proof}
    Let an \erk{} method be given with $A \geq 0, b \geq 0$ and assume that the method has WSO $q \geq 2$. We show that $p \leq 1$ and $q = \infty$. We prove the following first. Let $k \geq 2$: 
    \begin{align}\label{Eq:InductionStepVer2}
        \textrm{If} \quad \ b^T A^k = 0 \; \quad \textrm{then} \quad  b^T A^{k-1} = 0\;.
    \end{align}
    Starting with the WSO 2 condition $b^T A^{k-2} \left( A^2 e - \frac{1}{2} c^2 \right) = 0$ and using $b^T A^k = 0$ we have    
    \begin{alignat*}{5}
                       &   &   \quad b^T A^{k-2} c^2  &= 0  \, , &      &\quad \Longrightarrow    &       \quad b^T A^{k-2} c &= 0  \;   &   &\quad \textrm{(Non-negativity 2 with } u = (A^T)^{k-2} b\textrm{)} \, ,  \\ 
        \Longrightarrow&   &   \quad    b^T A^{k-1} e &= 0  \, , &      &\quad \Longrightarrow    &         \quad b^T A^{k-1} &= 0 \;    &    &\quad \textrm{(Non-negativity 1)} \;. 
    \end{alignat*}
    Now, since $b^T A^{s} = 0$ (trivially for an explicit scheme), induction via \eqref{Eq:InductionStepVer2} implies $b^T A = 0$ and thus $b^Tc=0$, so the method has order at most $p = 1$.  Furthermore, if $b^T \tau^{(2)} = 0$ then $b^T \tau^{(j)} = 0$ for all $j \geq 1$ (since $b^T c^j = 0$ for all $j$ and $b^T A = 0$). Hence, $q = \infty$. 
\end{proof}

\begin{corollary}
    Any \erk{} method with order $p \geq 2$ and WSO $q \geq 2$ has an SSP coefficient of zero. 
\end{corollary}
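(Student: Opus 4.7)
The plan is to derive the corollary as an immediate consequence of Theorem~\ref{thm:positive_ceofficients} by invoking the standard characterization of methods with a positive SSP coefficient. Recall from the Shu--Osher/Kraaijevanger theory that an \erk{} method has a positive SSP coefficient only if its coefficients can be expressed as a convex combination of forward Euler steps, which in turn forces the entries of $A$ and $b$ to be non-negative.

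With this prerequisite in hand, I would argue by contradiction. Suppose an \erk{} method has order $p \geq 2$, WSO $q \geq 2$, and a strictly positive SSP coefficient. The positivity of the SSP coefficient gives $A \geq 0$ and $b \geq 0$, so the method falls under the hypothesis of Theorem~\ref{thm:positive_ceofficients}. That theorem then asserts that the method must satisfy either WSO $q=1$ or order $p \leq 1$. Both of these alternatives contradict the standing assumption $p \geq 2$ and $q \geq 2$, so the SSP coefficient cannot be positive, i.e., it must be zero.

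The only real subtlety is that the corollary is phrased for the SSP coefficient itself (not merely non-negativity of $(A,b)$), so I would briefly cite the non-negativity prerequisite rather than re-derive it; everything else is a one-line appeal to the preceding theorem. No obstacle of substance is expected.
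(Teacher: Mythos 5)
Your proposal is correct and follows essentially the same route as the paper: both invoke the Kraaijevanger-type necessary condition that a positive SSP coefficient forces $A \geq 0$ and $b \geq 0$, and then apply Theorem~\ref{thm:positive_ceofficients} to rule out $p \geq 2$ together with $q \geq 2$. The only cosmetic difference is that the paper cites the condition in the form $A \geq 0$, $b > 0$ for irreducible methods, whereas your weaker $b \geq 0$ already suffices for the argument.
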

\begin{proof}
    A necessary condition for a non-zero SSP coefficient of an irreducible \rk{} method is $A \geq 0$ and $b > 0$ (see \cite[Theorem~4.2]{kraaijevanger1991contractivity}).
    But by Theorem \ref{thm:positive_ceofficients}, this is only possible for an \erk{} scheme if $p \leq 1$ or $q = 1$.
\end{proof}

High WSO \erk{} methods can still be conditionally SSP for \emph{linear, constant-coefficient} problems. The SSP step size in this setting depends only on the stability function of the method, and for each method discussed below, we include the linear SSP coefficient.

\section{Construction of methods} \label{sec:construction}
In this section we construct practical \erk{} schemes with high WSO (i.e. $q>1$). In \cref{subsec:conERKslow} we construct schemes with a minimum number of stages (i.e., $s = p+q - 1$) and optimized error constants up to orders $p, q \leq 5$. In \cref{sec:high} we construct parallel iterated \rk{} methods with arbitrarily high order and weak stage order. 

We denote a method with $s$ stages, order $p$, and WSO $q$ via the triplet $(s,p,q)$. We also restrict our focus to methods where $q = p$ or $q = p - 1$ since \erk{} schemes with high WSO typically achieve an error of $\min(p, q+1)$ in (linear) PDE problems or $\min(p, q)$ in ODE problems. 

\subsection{Construction of methods with the minimum number of stages}\label{subsec:conERKslow} 
Theorem~\ref{thm:qm3} suggests a way to  construct $(s,p,q)$ schemes that achieve equality in \eqref{pqs-bound}. 
First, we simultaneously solve the
quadrature (or bushy-tree) conditions
\begin{equation}\label{eq:quadrature-conditions}
    b^T c^{j-1}  = \tfrac{1}{j}
	\quad\text{for}\quad
	1 \leq j \leq p \,
\end{equation}
together with equations \eqref{Eq:FirstSylvester}--\eqref{Eq:bvector}, which enforce the
the conditions for WSO $q$.

Although the equations \eqref{Eq:FirstSylvester}--\eqref{Eq:bvector} and \eqref{eq:quadrature-conditions} are nonlinear, they enable a solution through a sequence of two linear systems if we fix in advance $A_{22}, A_{33}, c$. This leads to the following algorithm:
\begin{algodef}[Parametric Solution of \eqref{Eq:FirstSylvester}--\eqref{Eq:bvector} \& \eqref{eq:quadrature-conditions}]\label{alg:cap}
    \phantom{New}          
    \begin{enumerate}[nosep] 
        \item[]\hspace{-0.5cm}{\bf \textrm{Inputs:}} $(A_{22}, A_{33}, c) \in \mathbb{R}^{q-1 \times q-1} \times 
    \mathbb{R}^{p-1 \times p-1} \times \mathbb{R}^s$.
        \item[]\hspace{-0.5cm}{\bf \textrm{Constraints:}} $c_1 = 0, c_2, \ldots, c_{q+1}$ distinct, $q \geq p -1$, $p+q=s +1$.        
        \item[]\hspace{-0.5cm}{\bf \textrm{Outputs:}} $(A_{32}, A_{21}, A_{32}, b)
        \in \mathbb{R}^{p-1 \times q-1} \times \mathbb{R}^{q-1} \times \mathbb{R}^{p-1} \times \mathbb{R}^s$.
    \end{enumerate}        
    \begin{enumerate}[nosep] 
        \item[{\bf 1.}] Solve \eqref{Eq:FirstSylvester} to determine $L \in \mathbb{R}^{p-1 \times q-1}$. This equation is always solvable. 
        \item[{\bf 2.}] Set $A_{32} := L A_{22} - A_{33} L$.  
        \item[{\bf 3.}] Set $A_{21} := c_U - A_{22} e$ and $A_{31} := c_L - A_{32} e - A_{33} e$.
        \item[{\bf 4.}] Choose $\beta \in \mathbb{R}^{p}$ to satisfy the $p$-th bushy tree order conditions, i.e., 
            \begin{align*}
                \begin{\bracChoice}
                    e & c & \cdots & c^{p-1}
                \end{\bracChoice}^T 
                \begin{\bracChoice}
                    1 & 0 \\
                    0 & -L^T \\
                    0 & I
                \end{\bracChoice} \beta = 
                \begin{\bracChoice}
                1 & \tfrac{1}{2} & \cdots & \tfrac{1}{p}     
                \end{\bracChoice}^T \, .
            \end{align*}
        \item[{\bf 5.}] Set $b$ in terms of $\beta$ via \eqref{Eq:bvector}. 
        \item[]\hspace{-0.5cm}{\bf \textrm{Remark:}} The scheme $(A,b)$ is parameterized by the inputs, has WSO $q$ and satisfies \eqref{eq:quadrature-conditions}. 
    \end{enumerate}
\end{algodef}

Then, with the remaining degrees of freedom, we seek to solve the rest of the classical order conditions.
There is an additional advantage to this approach.  It turns out that many of the classical order conditions can be expressed as linear combinations of the WSO conditions and the quadrature conditions.  Specifically, any method with WSO $q$ ($\geq p - 1$) that satisfies the quadrature conditions of order $p$ also satisfies all of the following order conditions:
\begin{equation}\label{eq:palm-condition}
	\power{b}{T}A^{k}\, c^j = \frac{1}{(k+j+1)\cdots(j+1)}
	\quad\text{for}\quad
	0\leq j+k \leq p-1\;\;\mbox{and}\;\; j\/,\,k\geq 0\, .
\end{equation}
We refer to these as \emph{palm tree conditions}, because they correspond to trees that can be formed by attaching the root of a bushy tree to the leaf of a tall tree.

For $p\le 3$, the output of Algorithm~\ref{alg:cap} already satisfies all of the required order conditions because all classical order conditions are palm tree conditions \eqref{eq:palm-condition}. For $p>3$, we must choose the remaining free parameters so as to satisfy the remaining (non-palm tree) order conditions.

\subsubsection{Choice of optimal schemes}
Even after satisfying all conditions for order $p$ and WSO $q$, some freedom typically remains in choosing the \rk{} coefficients. As the stability function for all candidate schemes is fully determined (Theorem~\ref{thm:qm3}), and since these schemes cannot be SSP, one cannot use the stability region or SSP properties to choose among these schemes.
Instead, we identify optimal schemes by minimizing a combination of the following metrics:
\begin{equation} \label{eq:principal_error}
    \mA^{(p+1)} = \sqrt{\sum_{t \in T_{p+1}} \left( \frac{1}{\sigma(t)} \left( \frac{1}{\gamma(t)} - \Phi(t) \right) \right)^2} \, ,
    \quad \textrm{and} \quad 
    \mD = \max\{|a_{i,j}|, |b_i|, |c_i|\} \, . 
\end{equation}
Here $\mA^{(p+1)}$ is the principal error norm, measuring the 2-norm of the classical order $p+1$ residuals (see \cite{Butcher2016} for details on the symbols in $\mA^{(p+1)}$), while $\mD$ quantifies the $\infty$-norm of the coefficients.  We also impose the constraint $0 \leq c_i \leq 1$ so that the ODE right hand side is always evaluated within the span of the given time step. We use $\mA^{(p+1)}$ as the primary objective function, and $\mD$ as a secondary objective function when multiple schemes achieve the same optimal $\mA^{(p+1)}$ value. 

We now present methods that satisfy \eqref{pqs-bound} with equality. By Theorem~\ref{thm:qm3} the schemes are guaranteed irreducible and have stability functions given by the $p$-th partial sum of the exponential.

\subsubsection{(3,2,2) methods}
Here $A_{22} = A_{33} = 0$, so Algorithm~\ref{alg:cap} yields $L = A_{32} = 0$ and results in a 2-parameter family of schemes. All schemes in this family have stability function $R(z) = 1 + z + z^2/2$, a linear SSP coefficient of 1, and share the same principal error norm $\mA^{(3)}$ because $b^Tc^2=b^TAc=0$. The minimum value of $\mD$ is attained at $c_2 = \frac{1}{2}$, $c_3 = 1$. The 2-parameter family, and optimal scheme are: 
\begin{equation}\label{eq:ERK(3,2,2)}
\begin{butchertableau}
{c|ccc}
0 & & \\
c_2 & c_2 & \\
c_3 & c_3 & \\
\hline
& 1\!-\!\frac{1}{2c_2}\!-\!\frac{1}{2c_3} & \frac{c_3}{2c_2(c_3-c_2)} & \frac{c_2}{2c_3(c_2-c_3)}
\end{butchertableau}
\, ,
\qquad 
    (3,2,2) \quad 
\begin{butchertableau}
{c|ccc}
0 & & \\
\frac{1}{2} & \frac{1}{2} & \\
1 & 1 & \\
\hline
& -\frac{1}{2} & 2 & -\frac{1}{2}
\end{butchertableau}\;.
\end{equation} 

\subsubsection{(4,3,2) methods}
Here $A_{22} = 0$,
$A_{33} = \begin{\bracChoice}
    0 & 0 \\
    a_{43} & 0 
\end{\bracChoice}$, and $c = \begin{\bracChoice} 0 & c_2 & c_3 & c_4 \end{\bracChoice}^T$ 
so that Algorithm~\ref{alg:cap} yields a $4$-parameter family of schemes.  Two members of this family are
\begin{equation} \label{eq:ERK(4,3,2)}
\textrm{(4,3,2)} \quad  
\begin{butchertableau}{c|cccc}
 0 \\
 \frac{3}{10} & \frac{3}{10} \\
 \frac{2}{3} & \frac{2}{3} \\
 \frac{3}{4} & -\frac{21}{320} & \frac{45}{44} & -\frac{729}{3520} \\ \hline
 & \frac{7}{108} & \frac{500}{891} & -\frac{27}{44} & \frac{80}{81} \\
\end{butchertableau}\, , \quad \quad 
\textrm{ERK312} \quad 
\begin{butchertableau}{c|cccc}
0 &  &  &  & \\
\frac{1}{2} & \frac{1}{2} &  &  & \\
1 & 1 &  &  & \\
1 & -\frac{1}{2} & 2 & -\frac{1}{2} & \\
\hline
 & \frac{1}{6} & \frac{2}{3} & -\frac{1}{6} & \frac{1}{3}\\
\end{butchertableau} \, .
\end{equation}
Here ERK312 was proposed in \cite[eq.~5.2]{skvortsov2017avoid}, while (4,3,2) is our scheme with optimal principal error. Our scheme is obtained by first minimizing $\mA^{(4)}$ with respect to the $4$ parameters $(a_{43},c_2,c_3, c_4)$. This yields a $2$-parameter family of $\mA^{(4)}$-optimal schemes, among which $\mD$ is minimized with values near $(a_{43}, c_2, c_3, c_4)=( -\tfrac{729}{3520}, \tfrac{3}{10}, \tfrac{2}{3}, \tfrac{3}{4})$.  All $(4,3,2)$ methods have $R(z)=1 + z + z^2/2 + z^3/6$ and linear SSP coefficient of 1. 

\subsubsection{(5,3,3) methods}
Algorithm~\ref{alg:cap} yields a $6$-parameter family of $(5,3,3)$ schemes since $A_{22}$ and $A_{33}$ are $2\times 2$ matrices with a single non-zero entry while $c$ has $4$ degrees of freedom. The principle error $\mA^{(4)}$ is minimized when the classical fourth order condition $b^T \operatorname{diag}(c) A c = \frac{1}{8}$ holds (the other three residuals in $\mA^{(4)}$ are independent of the free parameters). Using the remaining parameters, we numerically minimize $\mD$ to determine (5,3,3):
\begin{equation} \label{eq:ERK(5,3,3)}
\begin{aligned}
&\begin{butchertableau}{c|ccccc}
0 \\
\frac{3}{11} & \frac{3}{11} \\
\frac{15}{19} & \frac{285645}{493487} & \frac{103950}{493487} \\
\frac{5}{6} & \frac{3075805}{5314896} & \frac{1353275}{5314896} \\
1 & \frac{196687}{177710} & -\frac{129383023}{426077496} & \frac{48013}{42120} & -\frac{2268}{2405} \\ \hline
& \frac{5626}{4725} & -\frac{25289}{13608} & \frac{569297}{340200} & \frac{324}{175} & -\frac{13}{7}
\end{butchertableau}
\, , & \quad \quad 
&
\begin{butchertableau}
{c|ccccc}
0 &  &  &  &  &  \\
\tfrac{1}{3} & \tfrac{1}{3} &  &  &  &  \\
\tfrac{2}{3} & \tfrac{2}{3} &  &  &  &  \\
1 & 1 &  &  &  &  \\
0 & -\tfrac{11}{12}& \tfrac{3}{2} & -\tfrac{3}{4} & \tfrac{1}{6} &  \\ \hline
& \tfrac{1}{4} & -3 & \tfrac{15}{4} & -1 & 1
\end{butchertableau}\;. \\
&\hspace{3cm}\textrm{(5,3,3)}  & &\hspace{1.5cm}\textrm{ERK313} 
\end{aligned}
\end{equation}
An additional scheme, ERK313, from this family appears in \cite[eq. 5.2]{skvortsov2017avoid} and admits an L-shaped structure (which may have computational advantages in a parallel environment). All $(5,3,3)$ methods have $R(z)=1 + z + z^2/2 + z^3/6$, and linear SSP coefficient of 1.

\begin{remark}
It is natural to ask whether schemes with a similar L-shaped structure (where $A$ has
non-zero entries only in the first column and last row) can be constructed with $p=q=4$
or higher.  It can be shown that this is not possible. \myremarkend
\end{remark}

\subsubsection{4th and 5th order methods}\label{sec:optimized}
All schemes in the section are reported in Appendix~\ref{appendix:methods} and derived using the Integreat library \cite{roberts2023integreat}. We start with the parameterization of $(A,b)$ provided by Algorithm~\ref{alg:cap} in terms of $(A_{22}, A_{33}, c)$. 

To devise $(6,4,3)$ and $(7,4,4)$ methods, we symbolically solve the single non-palm tree condition of order four, $b^T \operatorname{diag}(c) A c = \frac{1}{8}$, and using the parameterization perform unconstrained optimization first of $\mA^{(5)}$ and then of $\mD$.

To devise $(8,5,4)$ and $(9,5,5)$ methods, we reduce the number of degrees of freedom in our parameterization by fixing the abscissae to be rational approximations of the Gauss--Lobatto nodes on $[0, 1]$ and by setting $A_{22} = 0$.
The latter assumption enables stages 2 through $q+1$ to be computed in parallel.
We use numerical optimization software to find near-optimal values of the remaining parameters, and then exactly solve the remaining non-palm tree conditions for parameters that are close to those obtained by numerical optimization.

We summarize our finding for these and lower order methods in Tables \ref{tab:min_stages} and \ref{tab:method_properties}.

\begin{SCtable}[2.5]
\centering
\begin{tabular}{l|ccccc}
\diagbox{q}{p} & 2 & 3 & 4 & 5 \\ \hline
1     & \dbox{2} & 3 & 4 & 6 \\
2     & \fbox{3} & \dbox{4} & 5 & 6 \\
3     & 4 & \fbox{5} & \dbox{6} & 7 \\
4     & 5 & 6 & \fbox{7} & \dbox{8} \\
5     & 6 & 7 & 8 & \fbox{9} \\
\end{tabular}
\caption{Minimum number of stages required to achieve a given $p$, $q$ up to $p,q\le 5$. The $q = 1$ row represents the well-known bounds for \erk{} methods up to order 5 \cite{hairer1993solving}. The $2\le q\le 5$ rows were determined by finding, for each $(p,q)$-pair, a concrete \erk{} scheme that satisfies the proven (Thm.~\ref{thm:spq}) lower bound $s\ge p + q - 1$. The column $p = 1$ is not displayed, because the explicit Euler method has WSO $q = \infty$. Solid boxes mark the schemes natural for ODE problems ($q = p$), while dashed boxes denote the schemes of interest for PDE problems ($q = p-1$).
}
\label{tab:min_stages}
\end{SCtable}

\begin{table}[ht!]
    \centering
    \begin{tabular}{r|cccccc}
        Method & Source & Stages & Order & WSO & $\mA^{(p+1)}$ & $\mD$ \\ \hline \hline
        \textbf{(3,2,2)} & \eqref{eq:ERK(3,2,2)} & 3 & 2 & 2 & 2.357E-1 & 2 \\ \hline
        3\textsuperscript{rd} Order Shu--Osher & \cite{shu1988efficient} & 3 & 3 & 1 & 7.217E-2 & 1 \\
        \textbf{(4,3,2)} & \eqref{eq:ERK(4,3,2)} & 4 & 3 & 2 & 5.893E-2 & 1.003 \\
        ERK312 & \cite{skvortsov2017avoid} & 4 & 3 & 2 & 7.217E-2 & 2 \\
        \textbf{(5,3,3)} & \eqref{eq:ERK(5,3,3)} & 5 & 3 & 3 & 7.217E-2 & 1.858 \\
        ERK313 & \cite{skvortsov2017avoid} & 5 & 3 & 3 & 1.443E-1 & 3.75 \\ \hline
        Classical 4\textsuperscript{th} Order \rk{} & \cite{kutta1901beitrag} & 4 & 4 & 1 & 1.450E-2 & 1 \\
        \textbf{(6,4,3)} & \ref{sec:ERK(6,4,3)} & 6 & 4 & 3 & 1.443E-2 & 1.144 \\
        \textbf{(7,4,4)} & \ref{sec:ERK(7,4,4)} & 7 & 4 & 4 & 1.667E-2 & 6.187 \\ \hline
        5\textsuperscript{th} Order Dormand--Prince & \cite{dormand1980family} & 7 & 5 & 1 &  3.991E-4 & 11.60 \\
        \textbf{(8,5,4)} & \ref{sec:ERK(8,5,4)} & 8 & 5 & 4 & 1.217E-2 & 25.33 \\
        \textbf{(9,5,5)} & \ref{sec:ERK(9,5,5)} & 9 & 5 & 5 & 3.316E-2 & 44.42 
    \end{tabular}
    \caption{Properties of methods presented in this paper (bold) and comparable method from the literature. Sources with equation or section numbers (i.e., B.x) refer to the present work.}
    \label{tab:method_properties}
\end{table}

\subsection{Parallel iterated \rk{} methods with arbitrarily high WSO} \label{sec:high}
Here we systematically construct parallel iterated \rk{} \cite[Section II.11]{hairer1993solving} methods with arbitrarily high WSO.  While in principle, one could use the approach in \cref{subsec:conERKslow} to construct schemes with $p,q \geq 6$, the approach here provides a simple alternative to simultaneously solve both the WSO and order conditions for arbitrary $p, q$ (including the non-palm tree order conditions). Although the schemes in this section do not come close to satisfying the optimum order $p + q = s+1$, they demonstrate that \erk{} methods with arbitrarily high WSO exist. Furthermore, in an ideal parallel computational environment, the resulting parallel iterated \rk{} methods (e.g., in \cref{thm:arbitrary_order}) 
are roughly as expensive as a serial, $p$-stage, explicit \rk{} method, and could have practical advantages.

Like extrapolation and deferred correction, parallel iteration extends a basic \rk{} method (typically low order) to a new method of higher classical order.  Generally, parallel iterated techniques do not increase the WSO beyond that of the basic method; however with a specially-crafted basic method, we show that high WSO is attainable.

\begin{theorem} \label{thm:arbitrary_order}
    For all $p \geq 2$, there exist \erk{} methods of the type $(p^2, p, p)$. 
\end{theorem}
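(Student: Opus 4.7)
The plan is to construct, for each $p \geq 2$, a parallel iterated explicit scheme with $p$ levels of $p$ stages each (total $p^2$ stages), built from a carefully chosen $p$-stage basic implicit method. First, I would organize the $p^2$ stages into $p$ levels. Level $1$ consists of $p$ ``trivial'' stages at abscissa $0$ whose rows of $A$ vanish, so that $Y_i^{(1)} = u_n$ for every $i$. Levels $2, \ldots, p$ correspond to $p-1$ successive Picard iterates of a basic implicit method $(A^{*}, b^{*}, c^{*})$ with $p$ stages and stage order $p$: the $i$-th stage at level $j \geq 2$ has abscissa $c^{*}_i$ and depends on the $\ell$-th stage of level $j-1$ with weight $a^{*}_{i\ell}$. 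The weight vector $b$ of the explicit scheme is supported on level $p$ and equals $b^{*}$ there. The resulting $A$ is block-strictly-lower-triangular, so each level can be evaluated in $p$-way parallel.

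Second, classical order $p$ would follow from the standard convergence theory of parallel iterated RK methods: with $k = p-1$ Picard iterations against a basic method of classical order at least $p$, the scheme inherits order $\min(p^{*}, k+1) = p$. For the WSO analysis I would exploit the stage order of the basic method: because $A^{*}(c^{*})^{l-1} = (c^{*})^l/l$ for $l \leq p$, a direct calculation shows the residuals $\tau^{(l)}$, for $2 \leq l \leq p$, vanish on every level except level $2$, where $\tau^{(l)}$ equals $-(c^{*})^l/l$. Since each application of $A$ advances exactly one level, $b^T A^n \tau^{(l)}$ vanishes unless $n = p-2$, in which case it equals $-(1/l)\, b^{*T}(A^{*})^{p-2}(c^{*})^l$. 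Consequently WSO at least $p$ reduces to the algebraic conditions
\begin{equation*}
b^{*T}(A^{*})^{p-2}(c^{*})^l = 0, \qquad l = 2, \ldots, p,
\end{equation*}
which must be imposed on the basic method in addition to its stage order and classical order conditions.

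The main obstacle, and the bulk of the technical work, is showing that such a basic method exists for every $p \geq 2$. Stage order $p$ determines $A^{*}$ as the collocation matrix at the nodes $c^{*}$, classical order $p$ imposes $p$ quadrature conditions on $b^{*}$, and the $p-1$ palm-tree-vanishing conditions above give a total of $2p-1$ scalar constraints on the $2p$ parameters $(b^{*}, c^{*})$, suggesting by a naive count a one-parameter family of solutions. Verifying that this family contains a real solution with distinct abscissae requires either an explicit constructive formula --- for instance, choosing $c^{*}$ so that the combined linear system for $b^{*}$ is solvable and the resulting $b^{*}$ has the prescribed moment-vanishing properties --- or a continuation argument in $p$, and this is the step I would expect to demand the most care.
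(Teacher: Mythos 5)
Your overall architecture is the same as the paper's: organize the $p^2$ stages into levels generated by Picard iteration of a basic method with stage order $p$, observe that the residuals $\tau^{(l)}$ localize to the first nontrivial level, and reduce WSO $p$ to the moment conditions $b^{*T}(A^{*})^{p-2}(c^{*})^{l}=0$ for $l=2,\dots,p$. That reduction is correct. The genuine gap is exactly the step you flag at the end: you never establish that a $p$-stage basic method with all the required properties exists. Worse, the naive parameter count of ``$2p$ parameters, $2p-1$ constraints'' is misleading. With $p$ distinct nodes, stage order $p$ forces $A^{*}$ to be the collocation matrix, and the $p$ quadrature conditions then determine $b^{*}$ \emph{uniquely} from $c^{*}$ (a square Vandermonde system). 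So there is no residual freedom in $b^{*}$ at all, and the $p-1$ moment conditions become nonlinear polynomial constraints on the $p$ abscissae alone. One can check by hand that real solutions with distinct nodes exist for $p=2$ (e.g.\ the constraint reduces to $c_1c_2=\tfrac12(c_1+c_2)$), but a proof of real solvability with distinct abscissae for every $p$ is missing, and without it the theorem is not proved.

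The paper sidesteps this existence question entirely by giving the basic method one extra stage: it takes $p+1$ \emph{arbitrary} distinct abscissae $\widetilde{c}$ and defines $\widetilde{A}=\Vtilde\Stilde\Vtilde^{-1}$, $\widetilde{b}^{T}=e^{T}\Stilde\Vtilde^{-1}$ with $\Vtilde=[\,e\,|\,\widetilde{c}\,|\cdots|\,\widetilde{c}^{\,p}]$ and $\Stilde$ the nilpotent shift with subdiagonal $1,\tfrac12,\dots,\tfrac1p$. Stage order $p$, quadrature order $p$, and the key identity
\begin{equation*}
\widetilde{b}^{T}\widetilde{A}^{\,p-2}\,\widetilde{c}^{\,k}=e^{T}\Stilde^{\,p-1}\Vtilde^{-1}\widetilde{c}^{\,k}=e^{T}\Stilde^{\,p-1}e_{k+1}=0,\qquad k\ge 2,
\end{equation*}
all hold automatically, for every choice of nodes, because $\Stilde^{\,p-1}$ annihilates $e_{k+1}$ once $k+1\ge 3$. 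If you want to salvage your version, either prove real solvability of your node constraints for all $p$, or adopt the $(p+1)$-stage V-transformation construction (the consolidation of the trivial first level still yields $p^2$ effective stages).
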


\begin{proof}
    The proof is constructive and provides a family of methods parameterized by a set of abscissae.  For any $p \geq 2$, we take $\widetilde{c} \in \mathbb{R}^{p+1}$ to be an arbitrary set of distinct abscissae. We then take the parallel iterated \rk{} method $(A,b)$ to be given in terms of a $(p+1)$-stage basic method $(\widetilde{A}, \widetilde{b})$ as follows (cf. \cite[eq.~11.6, Section~II.11]{hairer1993solving}):
\begin{equation*}
    \begin{butchertableau}{r|l}
        {c} & A \\ \hline
        & {b}^T
    \end{butchertableau}
    =
    \begin{butchertableau}{c|ccccc}
        0 & 0 \\
        \widetilde{c} & \widetilde{A} & 0 \\
        \widetilde{c} & 0 & \widetilde{A} & 0 \\
        \vdots & \vdots & & \ddots & \ddots \\
        \widetilde{c} & 0 & \cdots & 0 & \widetilde{A} & 0 \\ \hline
        & 0 & \cdots & 0 & 0 & \widetilde{b}^T
    \end{butchertableau}
    \qquad \text{with} \qquad
    \begin{aligned}
        \widetilde{A} &= \Vtilde \Stilde \Vtilde^{-1} \\
        \widetilde{b}^T &= e^T \Stilde \Vtilde^{-1} \\
        \widetilde{V} &= \lbrac\begin{array}{c|c|c|c} 
            e &  \widetilde{c} & \cdots &  \widetilde{c}^p
        \end{array}\rbrac \\
        S &= \begin{bmatrix}
            0 \\
            1 & 0 \\
            & \frac{1}{2} & 0 \\
            & & \ddots & \ddots \\
            & & & \frac{1}{p} & 0
    \end{bmatrix}
    \end{aligned}
\end{equation*}

    Here the form of $(\widetilde{A}, \widetilde{b})$ is a special case of the \emph{V-transformation} in \cite{burrage1978special} (cf.~\cite[Section~IV.5]{hairer1996solving}). We take the number of row and column blocks in $A$ to be $p$, which corresponds to the number of parallel iterations. Thus, the method has $p^2+p$ stages, but it is reducible (by consolidating the first $p+1$ stages) to a $p^2$-stage method. 
    
    By construction, the basic method $(\widetilde{A}, \widetilde{b})$ has stage order and classical order $p$. By Theorem 11.2 in \cite[Section~II.11]{hairer1993solving}, the parallel iterated method $(A,b)$ also has classical order $p$. It remains to show that $(A,b)$ has WSO $p$ by checking \eqref{eq:WSO_rational} for $k = 2, \dots, p$: 
    \begin{align*}
        -k \, b^T (I - z A)^{-1} \tau^{(k) }
        &= \begin{\bracChoice}
            0 & \cdots & 0 & \widetilde{b}^T
        \end{\bracChoice} \begin{\bracChoice}
            I \\
            z \widetilde{A} & I \\
            \vdots & \ddots & \ddots \\
            z^{p-1} \widetilde{A}^{p-1} & \cdots & z \widetilde{A} & I
        \end{\bracChoice} \begin{\bracChoice}
            0 \\ \, \widetilde{c}^{\,k} \\ 0 \\ \vdots \\ 0
        \end{\bracChoice} = z^{p-2} \, \widetilde{b}^T \widetilde{A}^{p-2} \, \widetilde{c}^{\, k} \\
        &=  z^{p-2} \, e^T \Stilde^{p-1} \Vtilde^{-1} \widetilde{c}^{\, k}
        = z^{p-2} \, e^T \Stilde^{p-1} e_{k + 1}
        = 0 \, . 
    \end{align*}    
\end{proof}

\section{Efficient implementation for linear constant-coefficient problems with time-dependent forcing} \label{sec:implementation}
High WSO schemes have more stages per time step, and thus may be more costly, than low WSO schemes. We will show that high WSO schemes applied to linear ODEs 
\begin{equation} \label{eq:linear_ODE}
    y' = L y + g(t)\, ,
\end{equation}
may be implemented with only $\dim Y$ evaluations of (the potentially costly) $Ly$ as opposed to the usual $s$ ($g$ still requires $s$ evaluations per step).  This implementation is a specific case of a \emph{generalize additive Runge--Kutta} (GARK) scheme \cite{sandu2015generalized,roberts2022eliminating}, and can be viewed as a $\dim(Y)$-stage method with modified forcing. Here \eqref{eq:linear_ODE} includes semi-discretizations of constant-coefficient linear PDEs with time-dependent boundary conditions and forcings.
\begin{theorem} \label{thm:GARK_form}
    When applied to \eqref{eq:linear_ODE}, an \erk{} scheme $(A, b)$ with order $p \geq 1$ and $d = \dim(Y)$ is equivalent to the following two-additive GARK method \cite[eq. 8]{roberts2022eliminating}:    
    \begin{subequations} \label{eq:GARK}
        \begin{alignat}{2}
            \label{eq:GARK:stages}
            Y_i &= y_n + \dt \, \sum_{j=1}^{d} \, \widehat{a}_{i,j}\, L Y_j + \dt \, \sum_{j=1}^{s} 
            \, \rktwo{a}_{i,j} \, g(t_n + \rktwo{c}_j \dt) \, , & \qquad 1 \leq i \leq d \, , \\
            \label{eq:GARK:solution}
            y_{n+1} &= y_n + \dt \, \sum_{j=1}^{d} \, \rkone{b}_{j} \, L Y_j + \dt \, \sum_{j=1}^{s} \, \rktwo{b}_{j} \, g(t_n + \rktwo{c}_j \dt) \, .
        \end{alignat}
    \end{subequations}
    Here $\dt$ is the time step, the stages are consistent (i.e., $\rktwo{c} = A e = c$) and the coefficients are 
    \begin{equation*}        
            \rkone{A} = \begin{\bracChoice}
                0 \\
                b^T A^{d-1} e & 0 \\
                b^T A^{d-2} e - 1 & 1 & 0 \\
                b^T A^{d-3} e - 1 & 0 & 1 & 0 \\
                \vdots & \vdots & & \ddots & \ddots \\
                b^T A^{1} e - 1 & 0 & 0 & \cdots & 1 & 0
            \end{\bracChoice}
             \quad  \rkone{b} = e_d, \quad\textrm{and} \quad  
            \rktwo{A} = \begin{\bracChoice}
                0 \\
                b^T A^{d-1} \\
                b^T A^{d-2} \\
                \vdots \\
                b^T A
            \end{\bracChoice} \quad 
            \rktwo{b} = b \, .
    \end{equation*}
\end{theorem}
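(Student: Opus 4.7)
The plan is to expand both the standard RK update and the GARK update \eqref{eq:GARK} as explicit polynomials in $\dt L$ applied to $y_n$ and to the forcing vector $\hat G = (g(t_n + c_j \dt))_{j=1}^s$, and match them term by term. The crucial structural fact is that $A$ is strictly lower triangular, so $A^T$ is nilpotent; combined with $\dim Y = d$, the minimal polynomial of $A^T$ acting on $b$ must be $z^d$, so $b^T A^d = 0$. This identity (together with Lemma~\ref{dim-lemma}'s bound $\deg R(z) \leq d$) furnishes the truncation that allows a $d$-stage method to reproduce what appears to be an $s$-stage computation.

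First I would expand the standard RK update. In block form the stages satisfy $\hat Y = (I - \dt\, A \otimes L)^{-1}(e \otimes y_n + \dt\,(A \otimes I)\,\hat G)$. Using the geometric series for the inverse and projecting onto $b^T \otimes L$, then invoking $b^T A^j = 0$ for $j \geq d$, the update reduces to
\[
y_{n+1} = y_n + \sum_{j=1}^{d} \dt^j (b^T A^{j-1} e)\, L^j y_n + \sum_{j=1}^{d} \dt^j L^{j-1} (b^T A^{j-1})\, \hat G.
\]

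Second I would show by induction on $i$ that the GARK stages satisfy the closed form
\[
Y_i = y_n + \sum_{k=1}^{i-1} \dt^k (b^T A^{d-i+k} e)\, L^k y_n + \sum_{k=1}^{i-1} \dt^k (b^T A^{d-i+k})\, L^{k-1} \hat G.
\]
The base case $Y_1 = y_n$ is immediate. In the inductive step, the ``$1$'' in column $i-1$ of $\rkone{A}$ contributes $\dt L Y_{i-1}$, which by the hypothesis and the substitution $k \mapsto k+1$ produces exactly the $k \geq 2$ terms of the claim plus a spurious $\dt L y_n$; the first-column entry ``$b^T A^{d-i+1} e - 1$'' simultaneously cancels that spurious term and supplies the correct $k=1$ coefficient $b^T A^{d-i+1} e$. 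The row $b^T A^{d-i+1}$ of $\rktwo{A}$ supplies the matching $k=1$ term of the $\hat G$-sum. Applying the update $y_{n+1} = y_n + \dt L Y_d + \dt b^T \hat G$ (recall $\rkone{b} = e_d$ and $\rktwo{b} = b$) to the closed form for $Y_d$, where $d - i + k$ reduces to $k$, and reindexing $j = k+1$---using consistency $b^T e = 1$ to cover the $j=1$ term of the $Ly_n$-sum---recovers the expression from the previous step.

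The main obstacle will be the induction bookkeeping: the Krylov index $d - i + k$ in the stage formula shifts with each $i$, and the $Ly_n$ and $\hat G$ contributions must propagate correctly and in sync through the recursion. The precise design of $\rkone{A}$ (a subdiagonal of $1$'s paired with the first-column ``$-1$'' corrections) and of $\rktwo{A}$ (whose rows march through successively lower powers $b^T A^{d-i+1}$) is what makes the telescoping close after exactly $d$ stages. The nilpotency identity $b^T A^d = 0$, though elementary, is the essential non-obvious ingredient without which this compressed representation could not capture the full standard-RK update.
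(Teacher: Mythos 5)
Your proposal is correct and follows essentially the same route as the paper: both arguments Neumann-expand the stage recursion (yours stage-by-stage via induction on $Y_i$, the paper's via the explicit row vectors $\rkone{b}^T\rkone{A}^{i-1}$) and match the resulting polynomials in $\dt L$ against the standard RK update, with the truncation $b^T A^d = 0$ — which you rightly isolate as the key structural fact, and which the paper uses implicitly — closing the argument.
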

\begin{proof}
    We can solve the stages \eqref{eq:GARK} for $Y$ in compact vector form and write
    \begin{align*}
        Y &= \left(I - \dt \rkone{A} \, \otimes L \right)^{-1} \, \left(e \otimes y_n + \dt \, ( \rktwo{A} \otimes I) \, g(t_n + \rktwo{c} \dt) \right), \\
        y_{n+1} &= y_n + \dt \,\left( ( \rkone{b}^T \otimes L ) \, Y
         + ( \rktwo{b}^T \otimes I) \, g(t_n + \rktwo{c} \dt)  \right) \, ,  
    \end{align*}
    where $\otimes$ denotes a Kronecker product. Neumann-expanding the matrix inverse in $Y$ yields: 
    \begin{align*}
        y_{n+1} &= y_n + \sum_{i=1}^{d} \,
        \underbrace{\rkone{b}^T \rkone{A}^{i-1} e}_{=b^T A^{i-1}e} \, (\dt L)^{i} \, y_n 
        + \dt \, \left( (b^T \otimes I) + \sum_{i=1}^{d} \, \underbrace{\rkone{b}^T \rkone{A}^{i-1} \rktwo{A}}_{=b^T A^i} \, \otimes \, (\dt L)^i \right) \, g(t_n + c \dt)         \, \\
        &= R(\dt L) \, y_n + \dt \, (b^T \otimes I) \, (I - A \otimes \dt L)^{-1} g(t_n + c \dt)\, ,
    \end{align*} 
    which is exactly the numerical solution produced by the method $(A, b)$ applied to \eqref{eq:linear_ODE}. Here we have used identities for $\rkone{b}^T \rkone{A}^{i-1} e$ and $\rkone{b}^T \rkone{A}^{i-1} \rktwo{A}$ which follow from: For $1 \leq i \leq d - 1$
    \begin{align*}
        \rkone{b}^T \, \rkone{A}^{i-1} = \begin{\bracChoice}
            b^T A^{i-1} e - 1 & 0_{1 \times (d-i-1)} & 1 & 0_{1 \times (i-1)}
        \end{\bracChoice} \quad \textrm{and} \quad
         \rkone{b}^T \, \rkone{A}^{d-1} = \begin{\bracChoice}
            b^T A^{d-1} e & 0 & \cdots & 0 
        \end{\bracChoice} .         
    \end{align*}     
\end{proof}

\section{Numerical tests} \label{sec:tests}
We now study the convergence rate of the methods listed in Table~\ref{tab:method_properties} on a range of test problems. Our new methods are denoted simply by the triplet $(s, p, q)$, as discussed in \cref{sec:construction}: $(4, 3, 2)$, $(5, 3, 3)$, $(6, 4, 3)$, $(7, 4, 4)$, $(8, 5, 4)$, and $(9, 5, 5)$. To assess their performance, we compare these methods against well-known \erk{} methods; like virtually all existing \erk{} methods, these methods have WSO $q = 1$. Specifically, we consider the third order Shu--Osher method \cite{shu1988efficient}, the classical fourth-order method of Kutta \cite{kutta1901beitrag}, and the fifth-order Dormand--Prince method \cite{dormand1980family}, referred to as $(3, 3, 1)$, $(4, 4, 1)$, and $(7, 5, 1)$ respectively, for the purpose of these comparative assessments. 

The primary purpose of this section is to validate the theoretical results regarding order of convergence in the presence of time-dependent boundary conditions. It is not intended to be a study of the relative efficiency of the various methods involved, since efficiency is highly implementation-dependent and ought to involve careful parallel implementations of some of the methods. This is the subject of future work. Implementations for all the numerical examples presented here can be found in \cite{ERK_High_WSO_Code}.

\subsection{Spatial discretization and boundary conditions}
Since the phenomenon of order reduction (as well as various proposals for its cure)
is intimately related to the manner in which boundary conditions are imposed, it
is important to state precisely how we impose boundary conditions in the following
tests.

In the first two tests, which use first-order upwind finite difference spatial 
discretizations in one dimension, we simply impose the value of the boundary
condition function at the leftmost grid point, which coincides with the boundary.

In the remaining tests, which make use of the finite volume software Clawpack \cite{ketcheson2012pyclaw,mandli2016clawpack}, we utilize Clawpack's SharpClaw solver algorithm and replace the WENO reconstruction in space with a simple centered fifth-order finite volume reconstruction. 
Limiting is not needed for these test problems since the manufactured solutions are smooth (no shocks form).
For the 2D linear acoustics system, using a linear reconstruction instead of WENO ensures that the semi-discretization is linear.
We impose the boundary conditions in the traditional way, using ghost cells situated just outside the problem domain.  At each \rk{} stage and in each ghost cell, we set the solution value equal to the cell average of the exact solution at the corresponding time.

\subsection{Linear advection equation}
\label{subsec:linear_advection}
We begin with a simple hyperbolic model equation 
\begin{equation} \label{advection}
     \begin{aligned}
        u_t & = -u_x + f(x,t), \quad 0\leq x \leq 1,\quad 0 \leq t \leq 0.7 \;, \\
        u(x,0) & = u_{0}(x) \;, \\
        u(0,t) & = g_{0}(t) \;,
    \end{aligned}  
\end{equation}
with $u_0(x) = 1+x$, $f(x,t) = \frac{t-x}{(1+t)^2}$, and $g_0(t) = \frac{1}{1+t}$ such that the exact solution becomes $u(x,t) = \frac{1+x}{1+t}$. This test problem was used to demonstrate and analyze the order reduction phenomenon in \cite{sanz1986convergence}. We apply a first-order upwind finite difference space discretization on a uniform grid. Because the true solution is linear in $x$, no spatial discretization errors arise. Consequently, the error in the fully-discrete numerical solution results solely from the time discretization. 

\begin{figure}
    \centering
	\begin{minipage}[b]{.3\textwidth}
		\includegraphics[width=\textwidth]{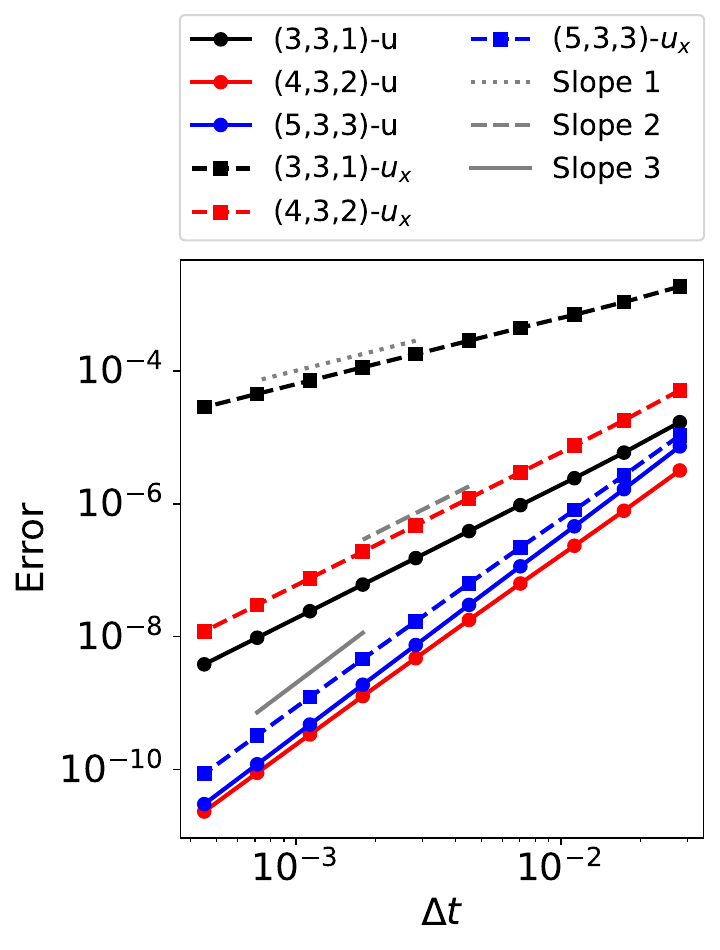}
	\end{minipage}
	\begin{minipage}[b]{.3\textwidth}
	\includegraphics[width=\textwidth]{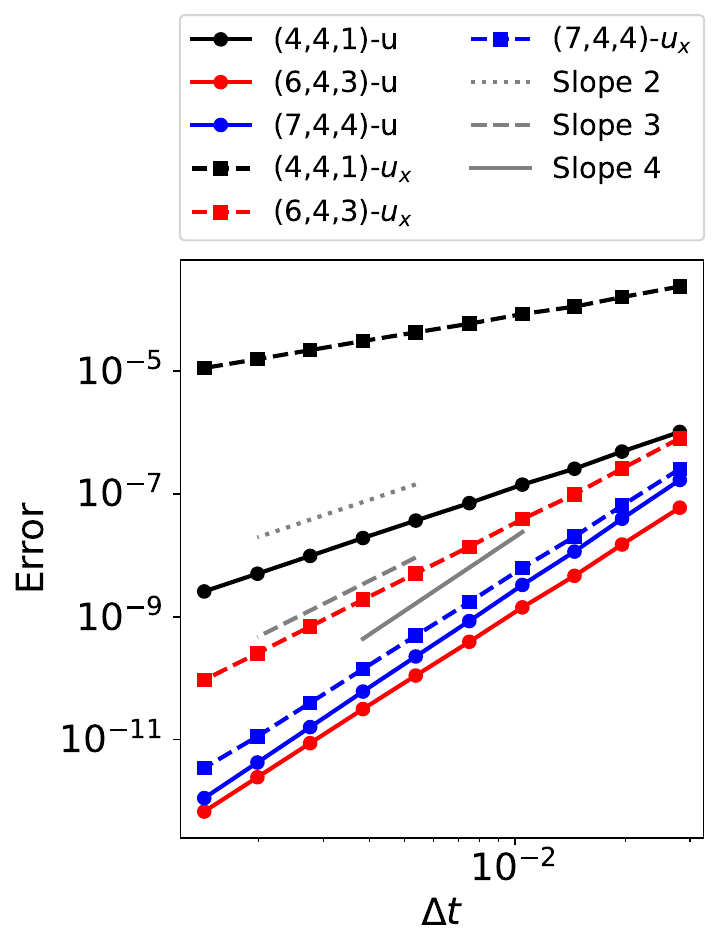}
	\end{minipage}
	\begin{minipage}[b]{.3\textwidth}
    \includegraphics[width=\textwidth]{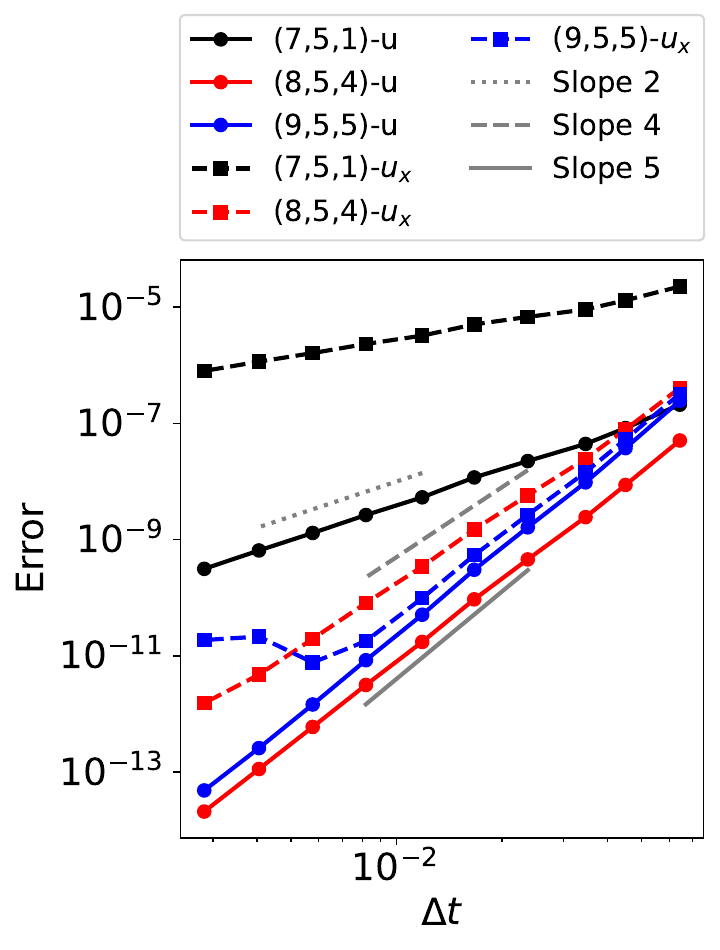}
	\end{minipage}
    \vspace{-.7em}
	\caption{Error convergence for the linear advection problem \eqref{advection}. Solid lines show convergence of the solution and dashed lines show convergence of the spatial derivative. Left: 3rd-order methods from this work (red and blue) compared with the 3rd-order Shu-Osher method. Middle: 4th-order methods from this work vs.\ the classical method of Kutta. Right: 5th-order methods from this work vs.\ the Dormand--Prince method.}
	\label{fig:LinAdv}
\end{figure} 

For each time stepping scheme, we solve the semi-discrete system up to $t=0.7$ using a constant CFL number of $0.9$. Figure~\ref{fig:LinAdv} presents the convergence curves for both the solution $u$ and its spatial first derivative $u_x$ (which is calculated via a 6th order finite difference formula based on grid-values of $u$). The reason for considering $u_x$ is as follows. A key mechanism for order reduction for IBVPs is the formation of numerical boundary layers \cite{rosales2017spatial, LubichOstermann1995} in the error. WSO reduces this boundary layer's amplitude, thereby recovering high order accuracy, but it does not remove it.

Figure~\ref{fig:LinAdv} shows that the traditional methods exhibit order reduction, reducing to second-order convergence in $u$ and first-order convergence in $u_x$. In contrast, all the high WSO methods exhibit the full order of convergence in $u$. Moreover, the methods with WSO $q = p-1$ lose one order in $u_x$. The reason for the loss of one order for schemes with $q < p$ is that for this first-order problem, the time-stepping scheme produces boundary layers of thickness $\mathcal{O}(\Delta t)$.

\subsection{Inviscid Burgers' equation} 
As a first example of a nonlinear PDE problem, we consider the inviscid Burgers' equation
\begin{equation} \label{burgers}
     \begin{aligned}
        u_t + u u_x & =  0, \quad 0\leq x \leq 1,\quad 0 \leq t \leq 0.8 \;, \\
        u(x,0) & = u_{0}(x) \;, \\
        u(0,t) & = g_{0}(t) \;, \\
    \end{aligned}  
\end{equation}
with $u_0(x) = 1+x$, and $g_0(t) = \frac{1}{1+t}$ such that the exact solution becomes $u(x,t) = \frac{1+x}{1+t}$. 
We again use a first-order upwind finite difference approximation of $u_x$ in space, which once again does not contribute to the overall error because the exact solution is linear in $x$.
Since this is a nonlinear problem, it falls outside the class of problems for which WSO is known to prevent order reduction.

\begin{figure}
	\begin{minipage}[b]{.3\textwidth}
		\includegraphics[width=\textwidth]{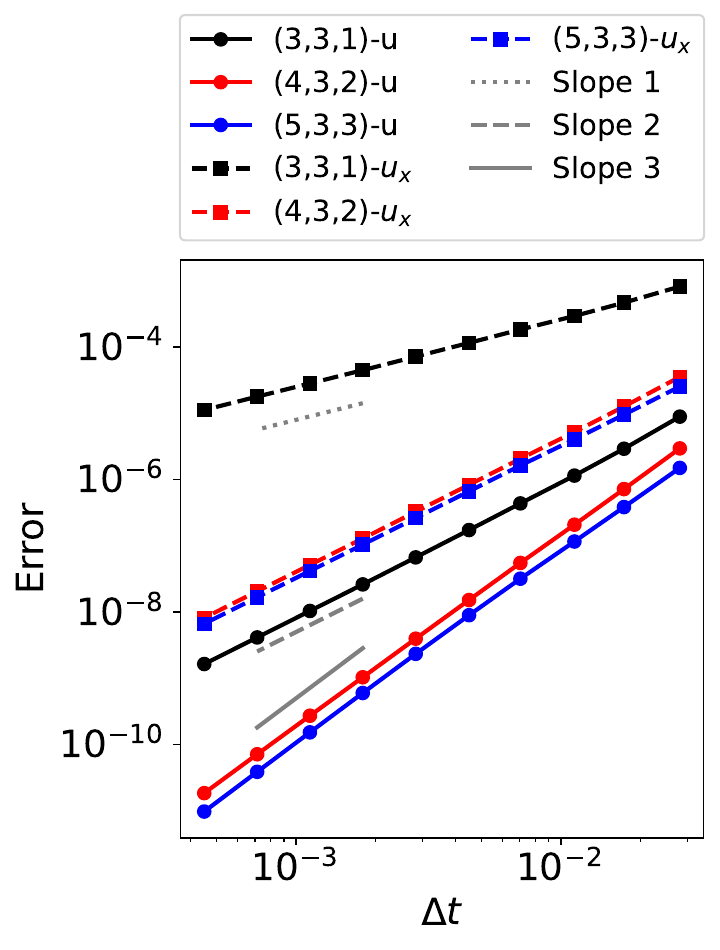}
	\end{minipage}
	\begin{minipage}[b]{.3\textwidth}
	\includegraphics[width=\textwidth]{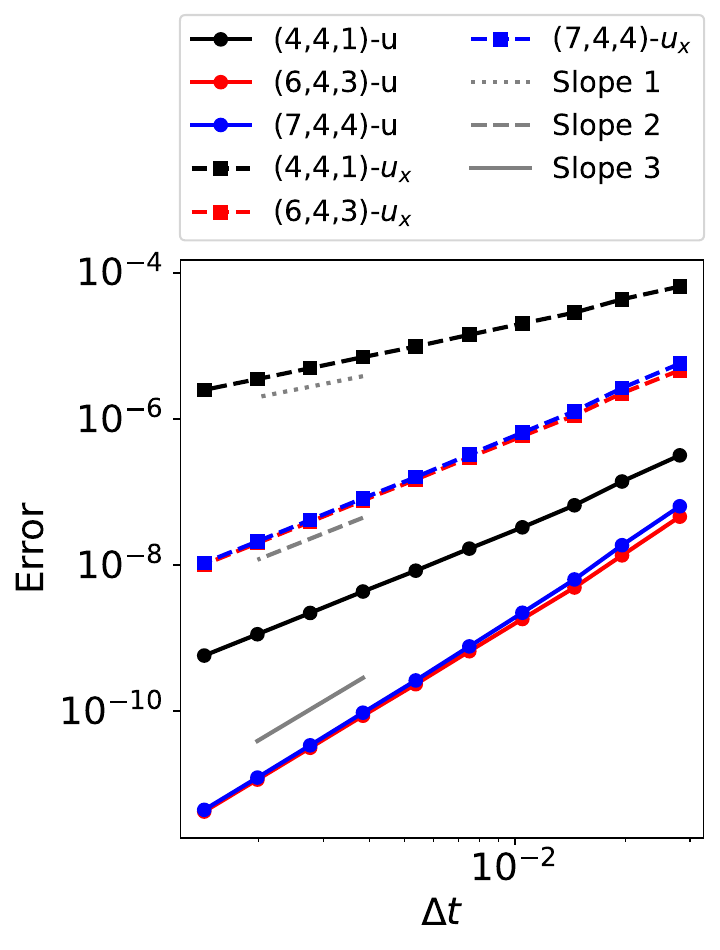}
	\end{minipage}
	\begin{minipage}[b]{.3\textwidth}
    \includegraphics[width=\textwidth]{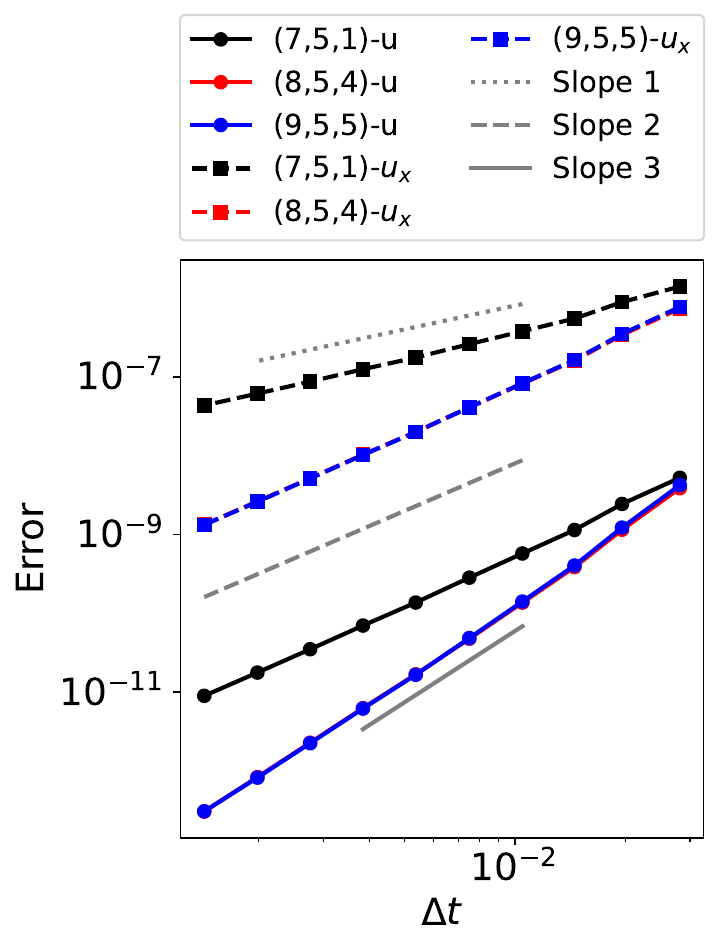}
	\end{minipage}
    \vspace{-.7em}
	\caption{Error convergence for the inviscid Burgers' problem \eqref{burgers} at $t=0.8$. Left: Third-order methods. Middle: Fourth-order methods. Right: Fifth-order methods.}
	\label{fig:ErrConvg_Burgers}
\end{figure} 
We integrate up to $t=0.8$ with a CFL number of $0.9$.
Convergence results are presented in Figure~\ref{fig:ErrConvg_Burgers}. All the classical methods with WSO 1 experience order reduction, resulting in a convergence rate of two for $u$ and one for $u_x$.  All the high WSO methods exhibit 
a convergence order of 3 or higher for $u$, and an order of 2 for $u_x$. 
We see that methods with high WSO still exhibit an advantage over traditional methods.
The reduction to order 3 by all the high WSO schemes considered here aligns with previous observations in \cite{biswas2023design, ketcheson2020dirk}.

\subsection{1D shallow water equations with a manufactured solution}
We next consider a 1D nonlinear system of hyperbolic equations: the shallow-water equations:
\begin{subequations}
    \begin{align} 
    h_{t}+{(hu)}_x &= f^h \;, \\ 
    (hu)_{t}+{\left(hu^2+\frac{1}{2}gh^2\right)}_x & = f^{hu}\;,
\end{align}
\end{subequations}
where $h$ is the total fluid column height as a function of $x$ and $t$, $u$ is the fluid's horizontal flow velocity, and $g$ is acceleration due to gravity. 
We consider the domain $x\in[0,1], t \in [0,0.5]$, and choose
the forcing function, the initial conditions, and the Dirichlet boundary conditions 
such that the solution is
\begin{align*} 
    h(x,t) = \frac{1+x}{1+t}\;, \quad  u(x,t)  = \frac{1+x^2}{0.5+t} \;.
\end{align*}
\begin{figure}[htb]
	\begin{minipage}[b]{.31\textwidth}
		\includegraphics[width=\textwidth]{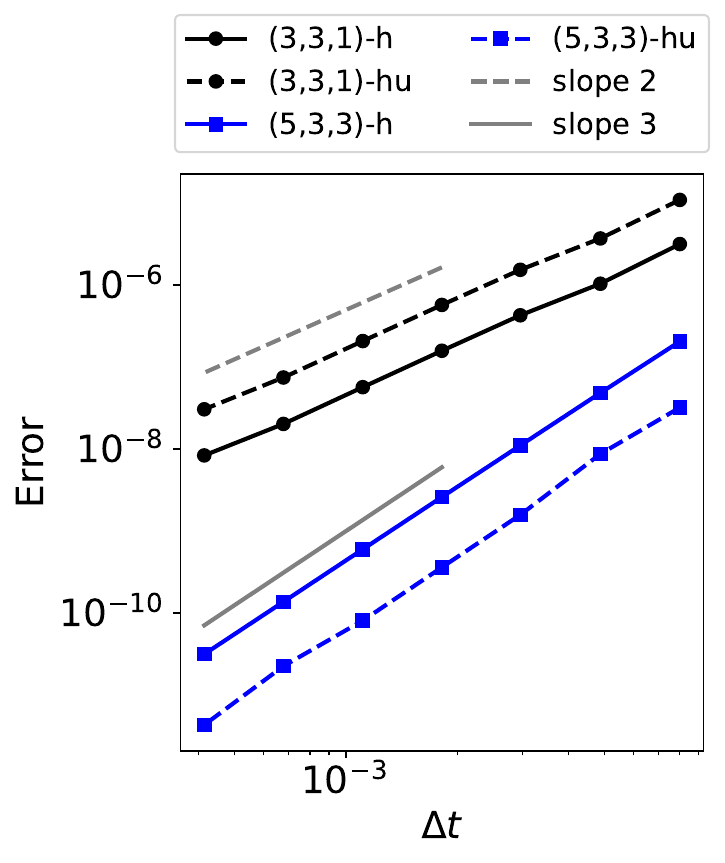}
	\end{minipage}
	\begin{minipage}[b]{.31\textwidth}
	\includegraphics[width=\textwidth]{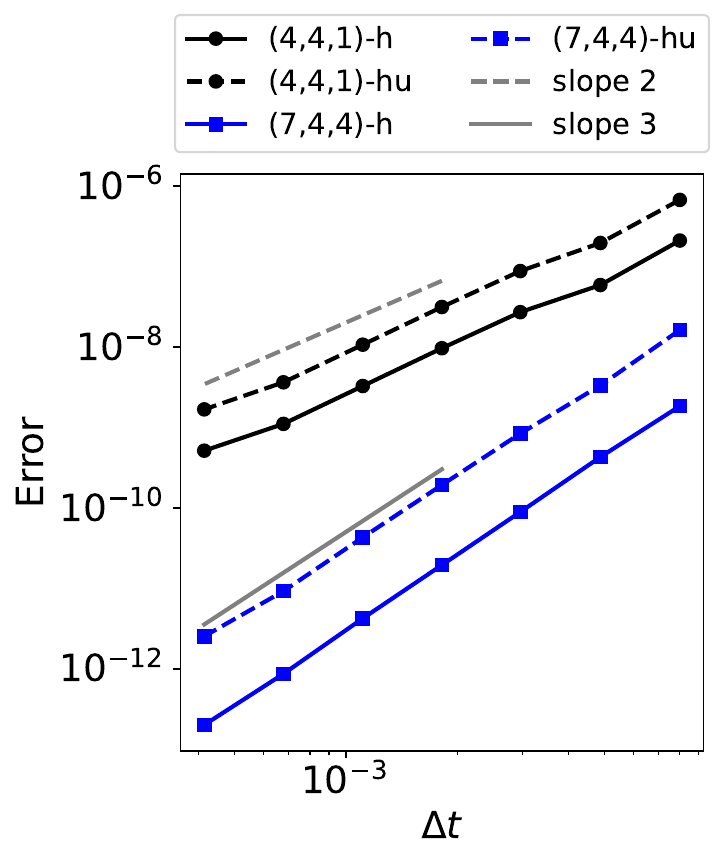}
	\end{minipage}
	\begin{minipage}[b]{.31\textwidth}
    \includegraphics[width=\textwidth]{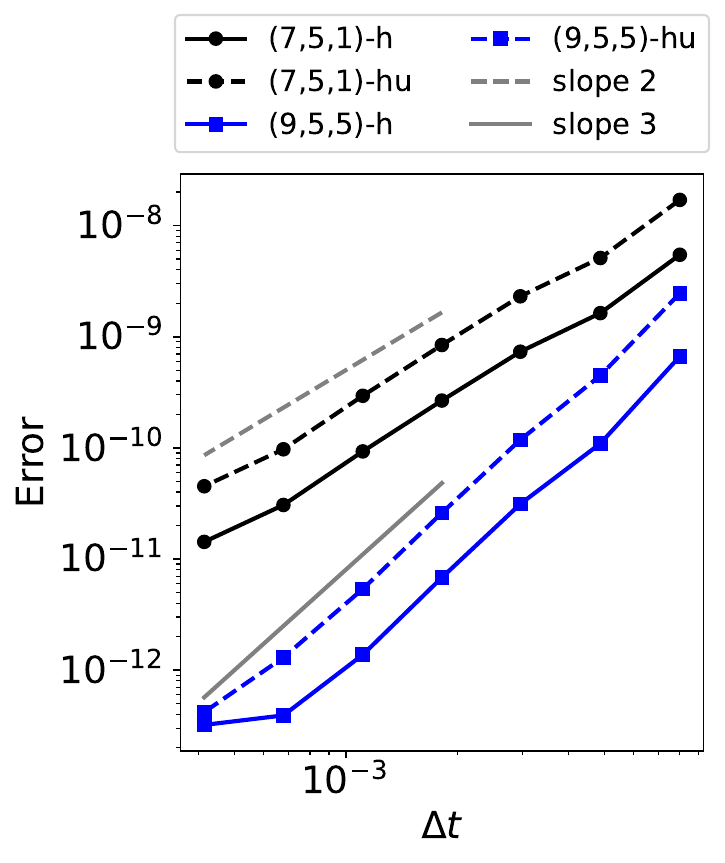}
	\end{minipage}
    \vspace{-.7em}
	\caption{Error convergence for the 1D shallow-water equations at $t=0.5$. Left: Third-order methods with different WSO. Middle: Fourth-order methods with different WSO. Right: Fifth-order methods with different WSO.}
	\label{fig:ErrConvg_1D_SWE}
\end{figure} 
To solve this problem, we employ the wave-propagation software Clawpack with a 5th-order spatial discretization, as described above.
We solve the equations with a constant CFL number of 0.8. Figure~\ref{fig:ErrConvg_1D_SWE} shows the convergence at $t=0.5$. The results are similar to those for Burgers' equation: the order of convergence for traditional methods is reduced to two, while all the high WSO methods exhibit a convergence order of three.  The high WSO methods exhibit substantially smaller errors across the full range of grid sizes considered.

\subsection{2D linear acoustics equations}
Because order reduction for PDE can manifest via numerical boundary layers (see \cref{subsec:linear_advection} and \cite{rosales2017spatial, LubichOstermann1995}), we here also investigate problems beyond 1D, because more interesting boundary layer structures are possible in higher dimensions. It is important to stress that the new \erk{} schemes mitigate order reduction solely via the coefficients of the time discretization and thus their application is oblivious to the spatial properties. This is in contrast to alternative approaches based on modified or special treatments of boundary condition, which have largely been tested in 1D and whose adaptation to higher dimensions may not be straightforward. Here we consider the 2D acoustic wave equation
\begin{subequations}
\begin{align} 
    p_{t}+K_{0}u_x+K_{0}v_y & = f^{p} \;, \\ 
    u_{t}+\frac{1}{\rho_{0}}p_x & = f^{u} \;,\\
    v_{t}+\frac{1}{\rho_{0}}p_y & = f^{v} \;,
\end{align} 
\end{subequations}
where $p(x,y,t)$ represents pressure and $(u(x,y,t),v(x,y,y))$ represents velocity. The parameter $\rho_{0}$ denotes the density of the medium, and $K_{0}$ represents the bulk modulus of compressibility. We choose $\rho_{0} = 1$ and $K_{0} = 4$. Once again, we manufacture a solution, where we select the forcing functions and the time-dependent Dirichlet boundary conditions such that the solution is
\begin{align*} 
    p(x,y,t) & = \frac{1+x+y}{1+t}\;, \quad  u(x,y,t)  = \frac{xt}{1+t} \;, \quad v(x,y,t)  = \frac{yt}{1+t} \;. 
\end{align*}
The spatial domain is taken as the unit square $[0,1]\times[0,1]$.

\begin{figure}
	\begin{minipage}[b]{.32\textwidth}
		\includegraphics[width=\textwidth]{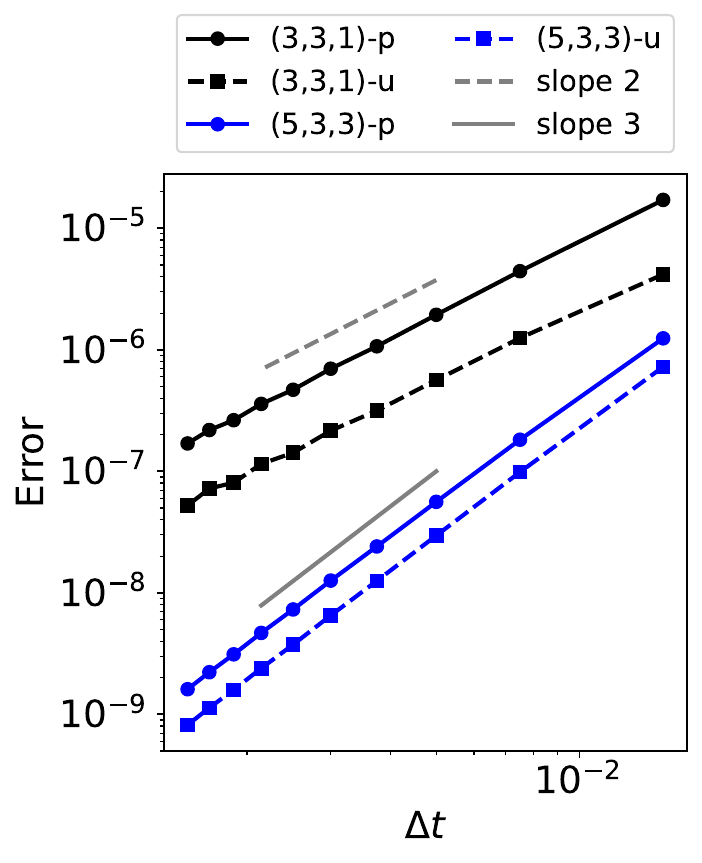}
	\end{minipage}
	\hfill
	\begin{minipage}[b]{.32\textwidth}
	\includegraphics[width=\textwidth]{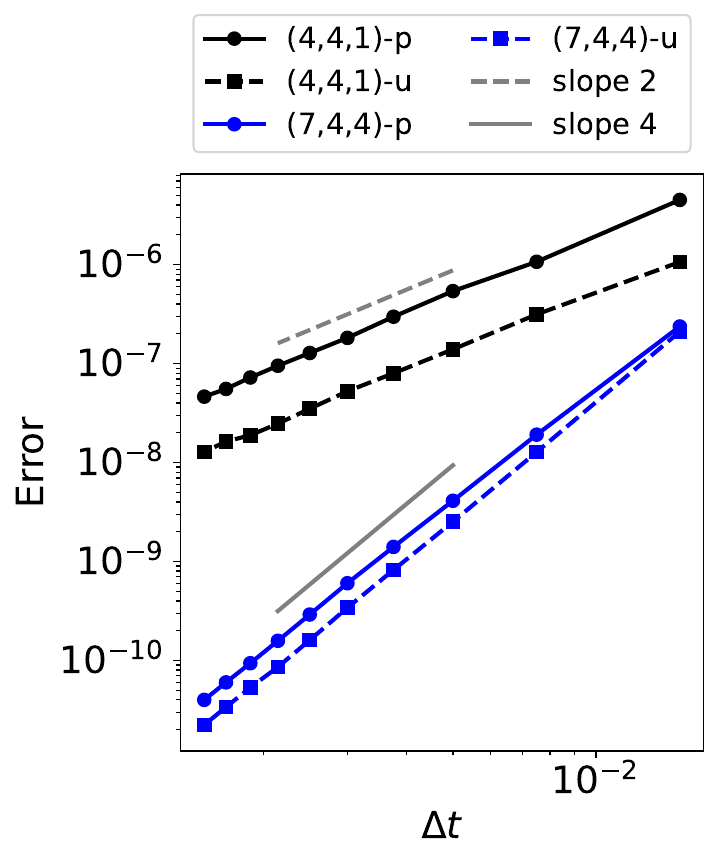}
	\end{minipage}
	\hfill
	\begin{minipage}[b]{.32\textwidth}
    \includegraphics[width=\textwidth]{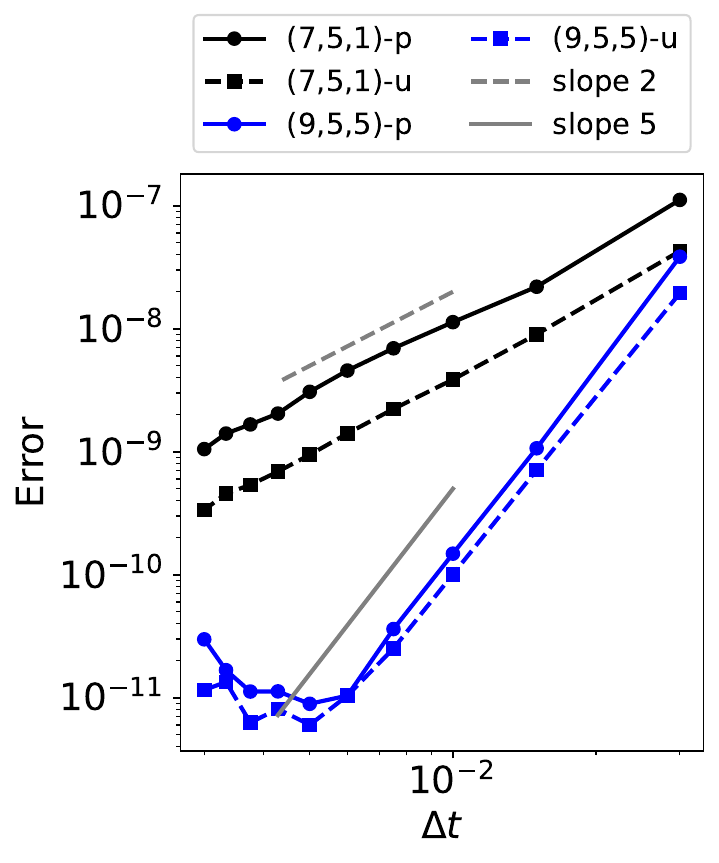}
	\end{minipage}
    \vspace{-.7em}
	\caption{Error convergence for the 2D linear acoustics equations at $t=1$. Left: Third-order methods with different WSO. Middle: Fourth-order methods with different WSO. Right: Fifth-order methods with different WSO. For all methods, the errors in the y-velocity ($v$) closely match those of the x-velocity ($u$), so they are omitted here.}
	\label{fig:ErrConvg_2D_Lin_Acoustics}
\end{figure} 

We again use Clawpack with a fifth-order spatial discretization to solve this problem.  We integrate with a CFL number of $0.45$ until $t=1$. The error behavior is shown in Figure~\ref{fig:ErrConvg_2D_Lin_Acoustics}.   As usual, all traditional methods suffer from order reduction. In contrast, the high WSO methods, in accordance with the theory, achieve their respective full order of convergence and produce significantly smaller errors compared to their WSO 1 counterparts.

\subsection{2D shallow water equations: a parabolic flood wave}
Our final test problem is a classical benchmark proposed by Thacker \cite{thacker1981some}, in which a parabolic mound of water spreads over a frictionless surface (we do not include Coriolis forces). We solve the 2D shallow water equations
\begin{subequations}
\begin{align} 
    h_{t}+{(hu)}_x+{(hv)}_y & = 0 \;, \\ 
    (hu)_{t}+{\left(hu^2+\frac{1}{2}gh^2\right)}_x+{(huv)}_y & = 0  \;,\\
    (hv)_{t}+{(huv)}_x+{\left(hv^2+\frac{1}{2}gh^2\right)}_y & = 0 \;,
\end{align} 
\end{subequations}
where $h(x,y,t)$ represents the fluid column height, $(u(x,y,t),v(x,y,t))$ corresponds to the fluid's horizontal flow velocity, and $g$ denotes the acceleration due to gravity. The solution (cf.~\cite{thacker1981some}) is
\begin{subequations}
\begin{align*} 
    h(x,y,t)  = \eta \left[\frac{T^2}{t^2+T^2}-\frac{x^2+y^2}{R_0^2}\left(\frac{T^2}{t^2+T^2}\right)^2\right] \, , \quad 
    u(x,y,t)  = \frac{xt}{t^2+T^2} \, , \quad 
    v(x,y,t)  = \frac{yt}{t^2+T^2} \, ,
\end{align*}   
\end{subequations}
where $\eta$ represents the initial height of the central point of the parabolic mound, $R_{0}$ is the initial radius of the mound, and $T$ is the time taken for the central height to decrease to $\frac{\eta}{2}$. We take $g=1$, $T=1$, and $\eta = 2$, such that
$R_0 = T \sqrt{2g\eta}=2$ \cite{thacker1981some}. 
The spatial domain is $\Omega = [-0.5, 0.5] \times [-0.5, 0.5]$, so that $h$ is strictly positive everywhere and the solution is smooth.
 
\begin{figure}
	\begin{minipage}[b]{.31\textwidth}
		\includegraphics[width=\textwidth]{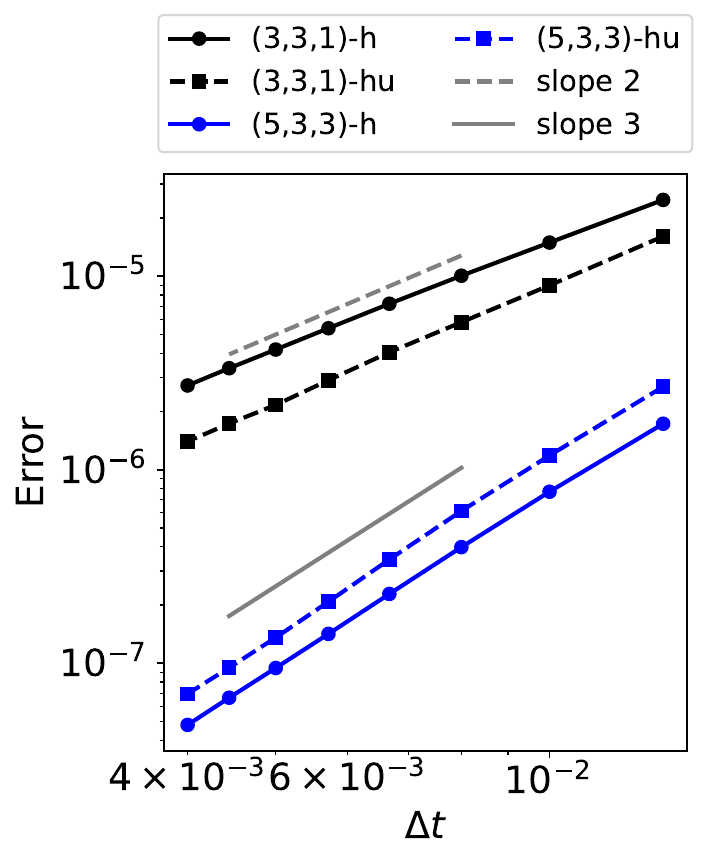}
	\end{minipage}
	\begin{minipage}[b]{.31\textwidth}
	\includegraphics[width=\textwidth]{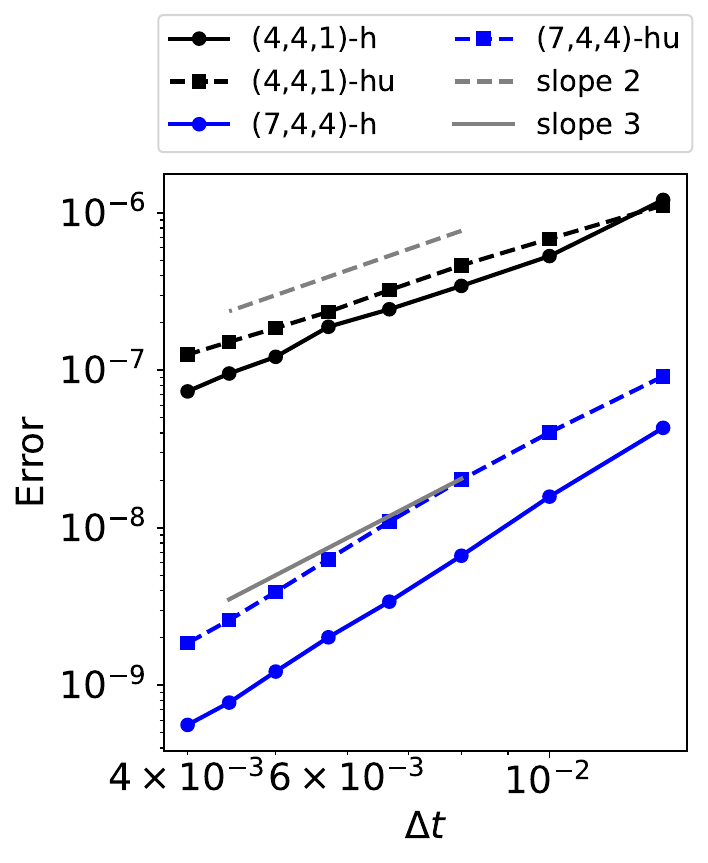}
	\end{minipage}
	\begin{minipage}[b]{.31\textwidth}
    \includegraphics[width=\textwidth]{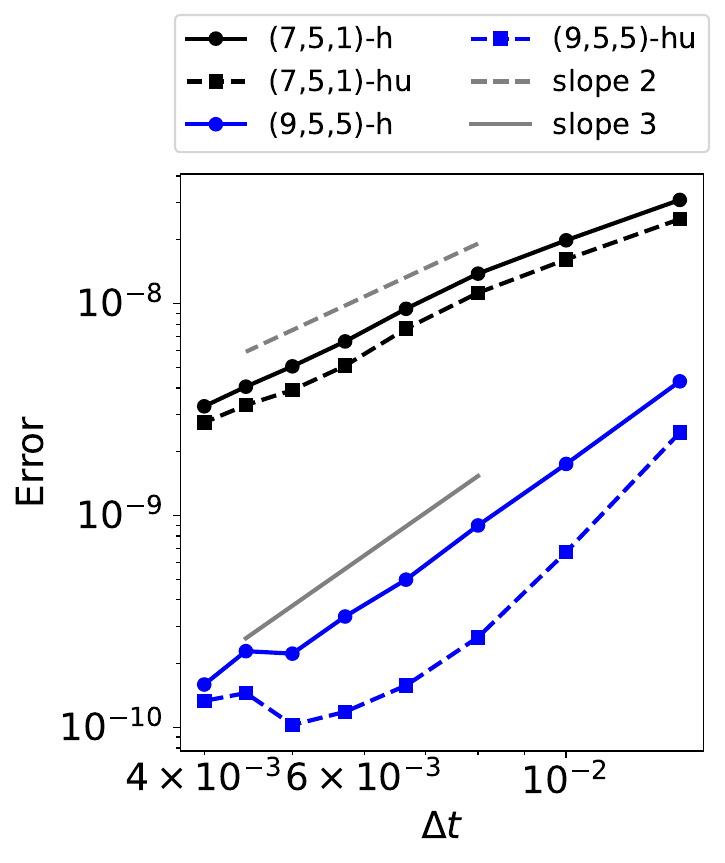}
	\end{minipage}
    \vspace{-.7em}
	\caption{Solution convergence for the 2D shallow-water equations at a final time $t=0.5$. Left: Third-order methods with different WSO. Middle: Fourth-order methods with different WSO. Right: Fifth-order methods with different WSO. The convergence graphs for the y-momentum ($hv$) are not displayed because they closely agree with the x-momentum ($hu$) for each order.}
	\label{fig:ErrConvg_ExactPolySol_2D_SWE}
\end{figure} 

We again use Clawpack with a fifth-order spatial discretization to solve this problem.
The error is evaluated at $t = 0.5$. The observed convergence, shown in Figure~\ref{fig:ErrConvg_ExactPolySol_2D_SWE}, is consistent with what one would expect based on the experiments above: the high WSO methods exhibit an order of accuracy of 3 (or even slightly better), while the traditional methods' accuracy is reduced to order 2. The high WSO methods yield dramatically smaller overall errors for the full range of step sizes considered.

\subsection{Comparison of WSO 3 methods}
Almost all ERK methods in the literature have weak stage order one, but there exist a few
methods with WSO two or three, most notably those of Skvortsov \cite{skvortsov2017avoid}.
Here we present a comparison between an \erk{} method of Skvortsov with WSO 3 (which we refer to as ERK313) and our (5,3,3) method, as well as a parallel iterated method of 9 stages with WSO 3 based on the construction in \cref{sec:high} (referred to as (9,3,3)).
The iterated method is determined by the choice of abscissas $\tilde{c}$; for this example we used the four-point Gauss quadrature nodes on $[0, 1]$.  We have observed similar results with methods based on other abscissas.
In Figure~\ref{fig:Comparison_WSO3_Mthds}, we present the convergence results for these three methods. The figure displays convergence plots for both the linear advection (left panel) and the inviscid Burgers' equation (right panel). We see that all three methods achieve 3rd-order convergence both problems. For the linear advection equation, all methods yield nearly identical errors, while for the inviscid Burgers' equation, our proposed (5,3,3) method exhibits the smallest errors.

\begin{figure}
    \centering
  \subfloat[Linear advection equation.]{\includegraphics[width=0.31\linewidth]{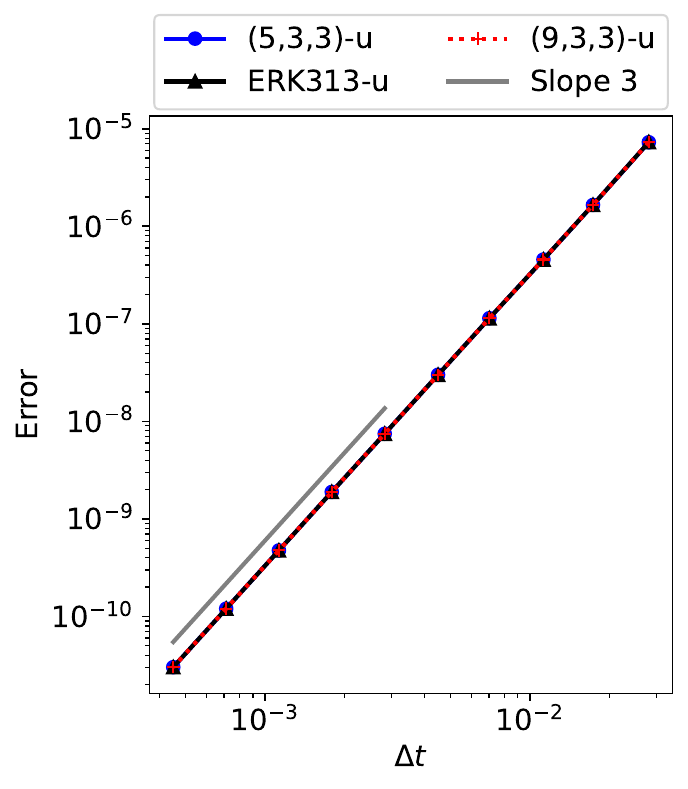} }
  \hfil
  \subfloat[Inviscid Burgers' equation.]{\includegraphics[width=0.31\linewidth]{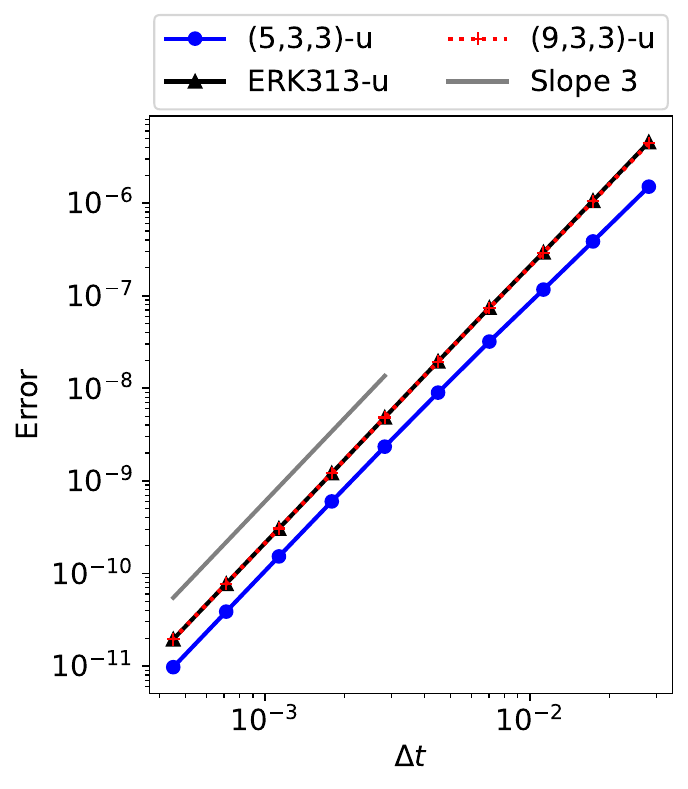} }
  \caption{Comparison of 3rd order \erk{} methods with WSO 3: (5,3,3) is our new method, ERK313 by Skvortsov, and (9,3,3) is the parallel iterated method with four-point Gauss quadrature abscissas on $[0,1]$ for $\Tilde{c}$.}
  \label{fig:Comparison_WSO3_Mthds}
\end{figure}

\section{Proofs of the main results}\label{Sec:PfsMainThm}
This section provides the proofs of Lemma~\ref{Lem:LBdimK}, Theorem~\ref{thm:qm1}, and Theorem~\ref{thm:qm2}, which are stated in \cref{subsec:mainresults}. 

\subsection{Proof of Lemma~\ref{Lem:LBdimK}}\label{subsec:lemLBdimK} 
Consider an arbitrary square matrix $\Xmat$; for each integer $m > 0$, we define the Vandermonde-like matrix
\begin{align}\label{Vm}
    V_m( \Xmat) & := 
    \lbrac\begin{array}{c|c|c|c} 
        \xsum & \xsum^2 & \cdots & \xsum^{m-1} 
    \end{array}\rbrac \, , \quad \textrm{where} \quad \xsum = X e \, .
\end{align}
We also define 
\begin{align}\label{Tm}
    \tau^{(k)}( \Xmat ) & := \Xmat \xsum^{k-1} - \frac{1}{k} \, \xsum^k  \, , \\
    T_m( \Xmat ) & := 
        \lbrac\begin{array}{c|c|c|c} 
            \tau^{(1)}( \Xmat ) & \tau^{(2)}( \Xmat ) & \cdots & \tau^{(m)}( \Xmat ) 
        \end{array}\rbrac \, .
\end{align}
Note that if $\Xmat=A$, then we have $\xsum=c$ and $\tau^{(j)}(\Xmat)=\tau^{(j)}$, the $j$th stage residual vector.
Given a matrix $M$, let $\Kspace(\Xmat,M)$ denote the smallest $\Xmat$-invariant space containing the column space of $M$, e.g., if $X$ is of size $r \times r$ then one has the explicit expression
\begin{align*}
    \Kspace(\Xmat,M) := \textrm{col} 
    \lbrac\begin{array}{c|c|c|c} 
        M & \Xmat \, M & \cdots & \Xmat^{r-1} \, M
    \end{array}\rbrac \, .
\end{align*}
With these definitions, $\Kspace(A,V_m(A) )$ is the span of all vectors $A^i c^j$ for
any integer $i$ and $1 \leq j \leq m-1$, and $K_m = \Kspace\left(A,T_m(A) \right)$.  

The next lemma establishes conditions under which $\Kspace( \Xmat, T_m( \Xmat) )$ contains the column space of $V_m(\Xmat)$. It provides a lower bound on the dimension of $K_q$, required in the proof of Lemma~\ref{Lem:LBdimK}.

\begin{lemma}[A matrix algebra result]\label{spaces-lemma}
Let $\Xmat \in \mathbb{R}^{r \times r}$ be a nilpotent matrix with row sums $\xsum = \Xmat e$. 
Suppose that $\xsum_j=0$ for some $j$ and let $m \geq 1$ denote the number of distinct entries of $\xsum$ (of which $m-1$ are non-zero). Then we have:
    \begin{enumerate}
        \item [(a)] $\Rank V_m(X) = m - 1$, 
        \item [(b)] $\xsum \in \Span\{ \xsum^2, \xsum^3, \dots, \xsum^{m}\}$,
        \item [(c)] $\Kspace\left( \Xmat, T_m( \Xmat)  \right) =  \Kspace\left( \Xmat, V_m( \Xmat ) \right)$, 
        \item [(d)] $\dim \Kspace\left( \Xmat, T_m( \Xmat)  \right) \geq m - 1$.
    \end{enumerate}
    Here $T_m(\Xmat)$ and $V_m(\Xmat)$ are defined in \eqref{Tm} and \eqref{Vm}.
\end{lemma}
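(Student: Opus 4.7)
The plan is to establish parts (a) and (b) via a collapse-to-distinct-values reduction, use them to prove the invariant-subspace identity (c), and then deduce (d) as an immediate corollary. Throughout, let $\alpha_1,\dots,\alpha_{m-1}$ denote the $m-1$ distinct nonzero entries of $\xsum$, and let $Z\subseteq\mathbb{R}^r$ denote the $(m-1)$-dimensional subspace of vectors that vanish on the indices where $\xsum_i=0$ and are constant on each level set of $\xsum$; every positive power of $\xsum$ lies in $Z$.

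For (a), permuting the rows of $V_m(\Xmat)$ so that equal entries of $\xsum$ are grouped together shows that $\Rank V_m(\Xmat)$ equals the rank of the $(m-1)\times(m-1)$ reduced matrix with rows $(\alpha_i,\alpha_i^2,\dots,\alpha_i^{m-1})$. This reduced matrix factors as $\operatorname{diag}(\alpha_1,\dots,\alpha_{m-1})$ times a standard Vandermonde on the distinct nonzero nodes $\alpha_i$; both factors are nonsingular, so $\Rank V_m(\Xmat)=m-1$ and $\Span\{\xsum,\dots,\xsum^{m-1}\}=Z$. The identical argument applied to the column collection $\xsum^2,\dots,\xsum^m$ (with reduced diagonal $\operatorname{diag}(\alpha_i^2)$) yields $\dim\Span\{\xsum^2,\dots,\xsum^m\}=m-1$, so this span also equals $Z$ and therefore contains $\xsum$, proving (b).

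The forward inclusion in (c) follows directly from $\tau^{(k)}(\Xmat)=\Xmat\xsum^{k-1}-\frac{1}{k}\xsum^k$: for $2\le k\le m$, $\Xmat\xsum^{k-1}$ lies in the $\Xmat$-invariant closure of the columns of $V_m(\Xmat)$, the term $\xsum^k$ is itself a column for $k\le m-1$, and $\xsum^m\in Z=\Span\{\xsum,\dots,\xsum^{m-1}\}$ by (b). The reverse inclusion is the main obstacle. Setting $W:=\Kspace(\Xmat,T_m(\Xmat))$ and iterating the identity $\xsum^k = k\Xmat\xsum^{k-1}-k\tau^{(k)}$ yields, by induction on $k$,
\[
\xsum^k = k!\,\Xmat^{k-1}\xsum + w_k\qquad\text{with}\quad w_k\in W,\quad 1\le k\le m.
\]
Substituting into the expansion $\xsum=\sum_{k=2}^{m}c_k\xsum^k$ from (b) then produces $(I-P(\Xmat))\xsum\in W$, where $P(\Xmat):=\sum_{k=2}^{m}c_k\,k!\,\Xmat^{k-1}$ has zero constant term. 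Because $\Xmat$ is nilpotent, so is $P(\Xmat)$; hence $I-P(\Xmat)$ is invertible with inverse equal to a polynomial in $\Xmat$, and the $\Xmat$-invariance of $W$ forces $\xsum\in W$. The same recursion then propagates to $\xsum^k\in W$ for all $k\ge 1$, establishing $\Kspace(\Xmat,V_m(\Xmat))\subseteq W$.

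Finally, (d) is immediate from (c) combined with (a): $\dim\Kspace(\Xmat,T_m(\Xmat))=\dim\Kspace(\Xmat,V_m(\Xmat))\ge\Rank V_m(\Xmat)=m-1$.
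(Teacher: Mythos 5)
Your proof is correct. Parts (a), (b), and (d) follow essentially the paper's own elementary Vandermonde reasoning; your (b) replaces the paper's componentwise-division-and-coefficient argument with a dimension count showing that $\Span\{\xsum^2,\dots,\xsum^{m}\}$ fills the whole $(m-1)$-dimensional space $Z$ of vectors supported on the nonzero level sets of $\xsum$ and constant on each, which is a slightly cleaner route to the same fact. The genuine divergence is in the hard (reverse) inclusion of (c). The paper runs a downward induction on the power $i$: starting from $\Xmat^{r}V_m(\Xmat)=0$ (nilpotency), it shows that if the columns of $\Xmat^{i}V_m(\Xmat)$ lie in $W:=\Kspace(\Xmat,T_m(\Xmat))$ then so do those of $\Xmat^{i-1}V_m(\Xmat)$, using the identity $-j\,\Xmat^{i-1}\tau^{(j)}(\Xmat)=\Xmat^{i-1}(\xsum^{j}-j\Xmat\xsum^{j-1})$ together with part (b) at every level. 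You instead solve the same recursion in closed form, $\xsum^{k}=k!\,\Xmat^{k-1}\xsum+w_k$ with $w_k\in W$, substitute into the expansion of $\xsum$ from (b) to get $(I-P(\Xmat))\xsum\in W$ for a polynomial $P$ with zero constant term, and invert $I-P(\Xmat)$ via nilpotency, using that the $\Xmat$-invariant space $W$ is invariant under any polynomial in $\Xmat$. Both arguments rest on exactly the same two ingredients (nilpotency of $\Xmat$ and part (b)), but yours concentrates the use of nilpotency into a single invertibility statement rather than threading it through an induction: this is more compact and makes transparent why $\xsum$ itself must already lie in $W$, after which all columns of $V_m(\Xmat)$ follow from your recursion (or, for powers above $m$, from the identification of $Z$). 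The paper's version, by contrast, is organized so that each induction step is a one-line verification. Either proof is acceptable.
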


\begin{remark}
    The assumption that $\xsum = \Xmat e$ in Lemma~\ref{spaces-lemma} is not necessary but is convenient in order to reduce the amount of notation required. \myremarkend
\end{remark}
\begin{remark} 
    Lemma~\ref{spaces-lemma} can be viewed as an extension of \cite[Thm.~3.11, Lem.~3.13]{biswas2022}, used in the theory of fully implicit and DIRK schemes, to \erk{}s. Here, the additional nilpotent structure of $\Xmat$ results in a tighter bound on $\dim K_m$. \myremarkend 
\end{remark}

\begin{proof}
    \noindent 
    (a) Due to the Vandermonde structure, $\Rank V_m(\Xmat)$ is the number of entries in $\xsum$ that are distinct from $0$, which is $m-1$. 
    
    \medskip
    \noindent
    (b) The columns of $V_m(\Xmat)$ are linearly independent and form a basis for the $(m-1)$-dimensional space $\Span\{\xsum, \xsum^2,\dots\}$. Write $\xsum^{m}_i = \sum_{\jind=1}^{m-1} \alpha_\jind \xsum^{\jind}_i$  for $1 \le i \le r \, ,$ and divide component-wise by $\xsum_i$:
    \begin{equation}\label{Eq:reduced}
        \xsum^{m-1}_i = \sum_{\jind=0}^{m-2} \alpha_{\jind + 1} \xsum^{\jind}_i\, ,
        \quad\quad 1 \le i \le r \, , \quad \xsum_i \neq 0 \, .
    \end{equation}        
    If $\alpha_1=0$, then \eqref{Eq:reduced} implies $\xsum^{m-1}$ is a linear combination of $\xsum^1, \dots, \xsum^{m-2}$, contradicting the linear independence of the columns of $V_m(\Xmat)$. Thus $\alpha_1\ne 0$ and we can write
    \begin{equation*}
        \xsum = \frac{1}{\alpha_1}\left(\xsum^{m} - \sum_{\jind=2}^{m-1} \alpha_{\jind} \xsum^{\jind}\right).
    \end{equation*}

    \smallskip
    \noindent
    (c) In general, to prove $\Kspace\left( \Xmat, M_1 \right) = \Kspace\left( \Xmat, M_2 \right)$ it suffices to show that $\Kspace\left( \Xmat, M_2 \right)$ contains the columns of $M_1$ and vice versa.  By part (b), $\Kspace\left( \Xmat,V_m(X) \right)$ contains all vectors of the form $\Xmat^i \, \xsum^j$ ($i \geq 0$ and $j \geq 1$) and thus all columns of $T_m(\Xmat)$.  It remains to show that $\Kspace\left( \Xmat, T_m(\Xmat) \right)$ contains the columns of $V_m(\Xmat)$, which we prove via induction. 

    We first show that if $\Kspace\left( \Xmat, T_m(\Xmat) \right)$ contains the columns of $\Xmat^i \, V_m(\Xmat)$ (for some $i \geq 1$), i.e.,
    \begin{align}\label{Eq:ih}
        \left\{ \Xmat^i \, \xsum, \Xmat^i \, \xsum^2, \dots, \Xmat^i \, \xsum^{m-1} \right\} 
        & \subseteq \Kspace\left( \Xmat, T_m(\Xmat) \right) \, ,
    \end{align}    
    then it contains the columns of $\Xmat^{i-1} \, V_m(\Xmat)$.  This result follows by combining the definition
    \begin{align*}
        -j \Xmat^{i-1} \, \tau^{(j)}(\Xmat) = \Xmat^{i-1} ( \xsum^j - j \Xmat \xsum^{j-1} ) & \in \Kspace\left( \Xmat, T_m(\Xmat) \right),
    & \mbox{for } 2 \le j \le m \, ,
    \end{align*}
    with the hypothesis \eqref{Eq:ih} to establish that $\Xmat^{i-1} \xsum^j \in \Kspace\left( \Xmat, T_m(\Xmat) \right)$ for $2 \le j \le m$.  Part (b) then implies that also $\Xmat^{i-1} \xsum \in \Kspace\left( \Xmat, T_m(\Xmat) \right)$, which concludes the induction step of the proof. 

    Starting the induction argument with $i = r$ for which the assumption holds trivially since $\Xmat^r=0$ ($\Xmat$ is nilpotent), and then going down to $i = 1$ shows that the columns of $V_m(\Xmat)$ are contained in $\Kspace\left( \Xmat, T_m(\Xmat) \right)$.
    
    \medskip
    \noindent
    (d) The result follows immediately from (c) since $\mathrm{col}\left(V_m(\Xmat) \right)$ is trivially an $(m-1)$-dimensional subspace of $\Kspace\left( \Xmat, V_m( \Xmat ) \right)$.
\end{proof}

Below, we apply Lemma~\ref{spaces-lemma} with $X$ given by a certain upper-left block of the Butcher matrix $A$, which we now identify. Given an \erk{} coefficient matrix $A\in \mathbb{R}^{s\times s}$ with row sums $c = Ae$ and WSO $q \geq 1$, let $\numc \le s$ denote the number of distinct entries of $c$, and $\hat{s}:=\hat{s}(q)$ the largest integer such that $\{c_1, c_2, \ldots, c_{\hat{s}}\}$ has $\mq$ distinct entries.  
We set $\mU$ the upper-left $\hat{s} \times \hat{s}$ block of $A$:
\begin{align}\label{Eq:Upperblock}
    A = \begin{\bracChoice}
        \mU & 0 \\
        * & *
    \end{\bracChoice} \, , \quad \textrm{where} \quad \mU \in \mathbb{R}^{\hat{s} \times \hat{s}} \, . 
\end{align}
Note that the definition implies that $c_{\hat{s}+1}$ differs from $c_1, \ldots, c_{\hat{s}}$. 

An important observation is that because $A$ is lower triangular, the space $\Kspace\left( \mU, T_q(\mU) \right) \subseteq \mathbb{R}^{\hat{s}}$ is the projection of $K_q$ onto the first $\hat{s}$ coordinate directions, because
\begin{align}\label{Eq:Projection}
    \lbrac\begin{array}{cc} 
        I  & 0
    \end{array}\rbrac   
    A^i \tau^{(j)} = \mU^i \, \tau^{(j)}(\mU) \in \Kspace\left( \mU, T_q(\mU) \right) .
\end{align}
Here $I$ is the $\hat{s} \times \hat{s}$ identity and $0$ the $\hat{s} \times (s-\hat{s})$ zero matrix.  The projection identity \eqref{Eq:Projection} implies 
\begin{align}\label{eq:dimProjbound}
    \dim \Kspace\left( \mU, T_q(\mU) \right) \leq \dim K_q \, .
\end{align}

\begin{proof} (of Lemma~\ref{Lem:LBdimK} from \cref{subsec:mainresults})  We apply Lemma~\ref{spaces-lemma} with $\Xmat = \mU$ ($r = \hat{s}$, $m = q$) defined by \eqref{Eq:Upperblock}.  If $q \ge \numc$, then $\Xmat = A$, $\xsum = c$, and Lemma~\ref{spaces-lemma}(c) implies $K_q=\Kspace\left(A,V_q(A) \right)$, so $c\in K_q$. The WSO conditions imply that $b^T c=0$, limiting $p = 1$.  If $q < \numc$, Lemma~\ref{spaces-lemma}(d) with \eqref{eq:dimProjbound} implies $\dim K_q \geq m - 1$.
\end{proof}

\subsection{Proof of Theorems~\ref{thm:qm1} and \ref{thm:qm2}}\label{subsec:proofqm}
We now turn our attention to proving Theorems~\ref{thm:qm1} and~\ref{thm:qm2}. Theorem~\ref{thm:qm1} is premised on S-reducibility, which is one of the two ways in which a \rk{} method is equivalent to, and may be replaced with, an \rk{} method with fewer stages.  We use the following criterion for S-reducibility, which is found in \cite[eq.~12.31, Section~IV]{wanner1996solving}
%
\ifarxiv
~(see \cref{appendix:Sred} for additional details)%
\fi
.

Given a (disjoint) partition of the integers $\cup_{i = 1}^{r} S_i = \{1, 2, \ldots, s\}$ for $r < s$, define the corresponding \emph{indicator vectors} for the sets $S_j$ as
\begin{align*}
    s_j = \sum_{i \in S_j} e_i \;, 
\end{align*}
where $e_i$ are the $i$-th unit vectors. Collectively the vectors $s_j$ are pairwise orthogonal, non-negative, and sum to $e$ (i.e., they form a discrete ``partition of unity''). Define the $s \times r$ \emph{partition matrix} $S = (s_{ij})_{1\leq i\leq s, 1\leq j \leq r}$ as
\begin{align}\label{Eq:partmat}   
    S = \lbrac\begin{array}{c|c|c|c} 
            s_1 & s_2 & \cdots & s_r 
        \end{array}\rbrac    
    \, , \quad \textrm{with entries} 
    \quad 
        s_{ij} = \begin{cases}
        1 & i \in S_j \\
        0 & \mbox{otherwise}\, .
    \end{cases}
\end{align}

\begin{definition}[Equation (12.31) of \cite{wanner1996solving}]\label{Def:altSred}
    A \rk{} method is $S$-reducible if and only if there is a non-trivial partition such that the associated partition matrix $S \in \mathbb{R}^{s \times r}$ ($r < s$) defined by \eqref{Eq:partmat} satisfies
    \begin{align}\label{Eq:AinvPart}
        A \, S = S \, \Bmat \, ,
    \end{align}
    for some matrix $\Bmat \in {\mathbb R}^{r \times r}$.  In other words, the method is S-reducible if and only if $\mathrm{col}(S)$ is $A$-invariant for a non-trivial partition. 
\end{definition} 

\begin{lemma} \label{lem:c-distinct}
Let a confluent \rk{} method be given and suppose that the column space of $V := V_{\numc}(A)$ is $A$-invariant.  Then the method is S-reducible.
\end{lemma}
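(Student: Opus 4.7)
The approach is to exhibit a non-trivial partition whose indicator matrix $S$ satisfies $AS = SB$ for some $B$. The natural candidate is to group indices by equality of abscissa: set $i \sim j$ iff $c_i = c_j$, let $S_1,\dots,S_{\numc}$ be the resulting equivalence classes, and let $S \in \mathbb{R}^{s \times \numc}$ be the associated partition matrix from \eqref{Eq:partmat}. Since the method is confluent we have $\numc < s$, so this partition is non-trivial in the sense of Definition~\ref{Def:altSred}.

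The key identity I would establish is
\[
\mathrm{col}(S) \;=\; \mathrm{col}(V) + \Span\{e\} \;=\; \Span\{e,\, c,\, c^2,\,\dots,\, c^{\numc-1}\}.
\]
Observe first that $\mathrm{col}(S)$ is precisely the subspace of vectors in $\mathbb{R}^s$ that are constant on every equivalence class. The forward inclusion $\Span\{e,c,\dots,c^{\numc-1}\}\subseteq\mathrm{col}(S)$ is immediate because each entry $(c^k)_i=c_i^k$ depends only on the class of $i$. For the reverse inclusion, given any $v\in\mathrm{col}(S)$, Lagrange interpolation at the $\numc$ distinct values of $c$ yields a unique polynomial $p$ of degree at most $\numc-1$ with $v=p(c)$; expanding $p$ in the monomial basis writes $v$ as a linear combination of $e,c,\dots,c^{\numc-1}$.

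With that identification made, the $A$-invariance of $\mathrm{col}(S)$ is essentially automatic. The hypothesis gives $A\cdot\mathrm{col}(V)\subseteq\mathrm{col}(V)\subseteq\mathrm{col}(S)$, while the stage-consistency assumption $Ae=c$ places $Ae$ in $\mathrm{col}(V)\subseteq\mathrm{col}(S)$ whenever $\numc\ge 2$ (since $c$ is the first column of $V$); in the degenerate case $\numc=1$ the vector $c$ is a multiple of $e$, and $\mathrm{col}(S)=\Span\{e\}$ is trivially $A$-invariant. Thus $A$ sends every generator of $\mathrm{col}(S)$ back into $\mathrm{col}(S)$, so there exists $B\in\mathbb{R}^{\numc\times\numc}$ with $AS=SB$, and Definition~\ref{Def:altSred} delivers S-reducibility.

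The only substantive step is the Lagrange-interpolation characterization of $\mathrm{col}(S)$ as polynomials in $c$; the rest of the argument is a short bookkeeping consequence of $c=Ae$ together with the hypothesized $A$-invariance of $\mathrm{col}(V)$. I do not anticipate any real obstacle, though some care is needed to handle the edge case $\numc=1$ in which $V$ has no columns.
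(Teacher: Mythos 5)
Your proposal is correct and follows essentially the same route as the paper: both identify $\mathrm{col}(S)$ with $\Span\{e,c,\dots,c^{\numc-1}\}$ (the paper via the factorization $[\,e\,|\,V\,]=S\,[\,e\,|\,\hat V\,]$ with $[\,e\,|\,\hat V\,]$ an invertible Vandermonde, you via Lagrange interpolation --- the same fact) and then deduce $A$-invariance of $\mathrm{col}(S)$ from $Ae=c$ together with the hypothesized invariance of $\mathrm{col}(V)$. The only cosmetic difference is that the paper exhibits the matrix $\tilde B$ explicitly while you argue at the level of subspaces, which Definition~\ref{Def:altSred} permits.
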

\begin{proof}
We show that there exists a matrix $\tilde{\Bmat}$ satisfying $AS=S \tilde{\Bmat}$.  Partition the $s$ abscissa values $c_1, \ldots, c_s$ into sets $S_i$ ($i = 1, \ldots, \numc$) such that two abscissae belong to the same set if and only if they are equal.  Define the partition matrix $S$ as in \eqref{Eq:partmat}. Since the scheme is confluent the partition is non-trivial (i.e., $\numc < s$).  

Let $\hat{c}\in{\mathbb R}^{\numc}$ denote the vector remaining after duplicate entries are removed from $c$, and define
\begin{align} \label{V}
    \hat{V} & := 
        \lbrac\begin{array}{c|c|c} 
            \hat{c} & \cdots & \hat{c}^{\numc-1} 
        \end{array}\rbrac \, .
\end{align}
Hence 
$\begin{\bracChoice}
    e & \hat{V}   
\end{\bracChoice}
$
is a (square, invertible) Vandermonde matrix.  Observe that 
\begin{align}\label{BlockRelation}
    \begin{\bracChoice}
        e & V   
    \end{\bracChoice}
    = S
    \begin{\bracChoice}
        e & \hat{V}   
    \end{\bracChoice} \, , 
    \ \ \ \ \ \mbox{ so } \ \ \ \ 
    S = \begin{\bracChoice}
        e & V   
    \end{\bracChoice}    
    \begin{\bracChoice}
        e & \hat{V}   
    \end{\bracChoice}^{-1} \, .
\end{align}
In a slight abuse of notation, $e$ appears twice in \eqref{BlockRelation} with different dimensions.  We can then write (since $AV=VB$ for some matrix $B$, by the $A$-invariance of $V$)
\begin{align*}
    AS & = 
    \begin{\bracChoice}
        Ae & A V 
    \end{\bracChoice}
    \begin{\bracChoice}
        e & \hat{V}   
    \end{\bracChoice}^{-1} 
    = 
    \begin{\bracChoice}
        S \hat{c} & V B
    \end{\bracChoice}
     \begin{\bracChoice}
        e & \hat{V}   
    \end{\bracChoice}^{-1}
    = S \tilde{B} \, ,   
\end{align*}
where in the last equality we have used the fact that $V = S \hat{V}$ from \eqref{BlockRelation} to obtain
$ \tilde{\Bmat} = 
\begin{\bracChoice}
    \hat{c} & \hat{V} B
\end{\bracChoice}
\begin{\bracChoice}
    e & \hat{V}   
\end{\bracChoice}^{-1} $.
Hence the method is S-reducible.
\end{proof}

To prove the first part of Theorem~\ref{thm:qm1}, we need to generalize Lemma \ref{lem:c-distinct} so as to apply it to the upper-left block $\mU$ in place of $A$.

\begin{corollary}\label{cor-reducible} Let $\mU$ be defined as in \eqref{Eq:Upperblock}. If $\bar{u} = \mU e$ contains repeated entries, and the column space of $V_q(\mU)$ is $\mU$-invariant, then the \rk{} method $(A,b)$ (for any $b$) is S-reducible.    
\end{corollary}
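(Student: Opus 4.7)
The plan is to apply Lemma \ref{lem:c-distinct} to the block $\mU$ itself (viewed as the Butcher matrix of a smaller ``method''), and then extend the resulting non-trivial partition of $\{1,\ldots,\hat{s}\}$ to $\{1,\ldots,s\}$ by adjoining singletons for the remaining indices.

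First I would verify the hypotheses of Lemma \ref{lem:c-distinct} for $\mU \in \mathbb{R}^{\hat{s}\times\hat{s}}$. By the definition of $\hat{s}$ given just above \eqref{Eq:Upperblock}, the row-sum vector $\bar{u} = \mU e$ agrees with $(c_1,\ldots,c_{\hat{s}})^T$ (using that $A$ is strictly lower triangular) and has exactly $q$ distinct entries, so $V_q(\mU)$ plays the role of $V_{n_c}$ in the lemma with $n_c = q$. Because $\bar{u}$ is confluent by hypothesis and $\mathrm{col}(V_q(\mU))$ is $\mU$-invariant by assumption, the lemma supplies a non-trivial partition matrix $\tilde{S} \in \mathbb{R}^{\hat{s}\times r}$ with $r < \hat{s}$ and a matrix $\tilde{B} \in \mathbb{R}^{r\times r}$ satisfying $\mU \tilde{S} = \tilde{S} \tilde{B}$.

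Next I would promote $\tilde{S}$ to a partition matrix for the full $s$-stage method via the block-diagonal extension
\begin{equation*}
    S = \begin{bmatrix} \tilde{S} & 0 \\ 0 & I_{s-\hat{s}} \end{bmatrix} \in \mathbb{R}^{s \times (r + s - \hat{s})},
\end{equation*}
which inherits 0/1 entries, pairwise-disjoint columns summing to $e$, and is non-trivial since $r + s - \hat{s} < s$. Writing the lower blocks of $A$ in \eqref{Eq:Upperblock} as $P$ and $Q$, a direct block multiplication gives
\begin{equation*}
    A S = \begin{bmatrix} \mU \tilde{S} & 0 \\ P \tilde{S} & Q \end{bmatrix} = \begin{bmatrix} \tilde{S} \tilde{B} & 0 \\ P \tilde{S} & Q \end{bmatrix} = S \begin{bmatrix} \tilde{B} & 0 \\ P \tilde{S} & Q \end{bmatrix},
\end{equation*}
which verifies \eqref{Eq:AinvPart} from Definition \ref{Def:altSred}. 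Since the construction uses only $A$, the conclusion holds for any $b$.

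I do not expect any substantive obstacle. The only step deserving a moment of care is confirming that appending singletons preserves the defining properties of a partition matrix and that the top and bottom block-rows of $AS$ can be matched simultaneously by a single choice of $B$; both follow immediately from the identity block sitting in the bottom-right of $S$.
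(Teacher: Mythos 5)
Your proposal is correct and follows exactly the paper's own argument: apply Lemma~\ref{lem:c-distinct} with $A$ replaced by $\mU$ and $V$ by $V_q(\mU)$, then extend the resulting non-trivial partition of $\{1,\ldots,\hat{s}\}$ by singletons on the remaining $s-\hat{s}$ indices. Your explicit block verification of $AS = SB$ is a detail the paper leaves implicit, but the route is the same.
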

\begin{proof}
    We can apply Lemma~\ref{lem:c-distinct} with $A$ replaced by $\mU$ and $V$ replaced by $V_q(\mU)$ and reach the same conclusion (i.e., partition the first $\hat{s}$ integers exactly as in the proof of Lemma~\ref{lem:c-distinct} with $A$ replaced by $\mU$, and take a trivial partition for the remaining $s - \hat{s}$ entries).
\end{proof}

\begin{proof} (of Theorem~\ref{thm:qm1} from \cref{subsec:mainresults}) The main idea is to show and exploit that 
\begin{equation}\label{eq:spaceEquals}
    \Kop\left( \mU ,V_q(\mU) \right) = \Col V_q( \mU ) \, . 
\end{equation}
This implies that the top components $K_q$ are the column space of a Vandermonde-like matrix. 

\medskip
\noindent 
(a) Apply Lemma~\ref{spaces-lemma} 
with $m = q$ to the matrix $\Xmat = \mU$ where $\mU$ is defined in \eqref{Eq:Upperblock}.  Thus, 
\begin{align}\label{Eq:Inequality}
    q - 1 = \mathrm{rank} \, V_q( \mU)  \le \dim \Kop(\mU,V_q(\mU))  \le \dim(K_q) = q-1 \, ,
\end{align}
where the rightmost equality arises from the hypothesis in the theorem. The inequality \eqref{Eq:Inequality} demands that \eqref{eq:spaceEquals} holds
and hence the columns of $V_q(\mU)$ are $\mU$-invariant. Corollary~\ref{cor-reducible} then requires the entries of $\hat{c} = \mU e$ (containing the first $q$ abscissas of $A$) to be distinct, otherwise the scheme is reducible.  By the definition of $\hat{s}$ in $\mU$ as the \emph{largest} integer such that $c_1, \ldots, c_{\hat{s}}$ has $q$ distinct entries, we also obtain that $c_{q+1}$ is distinct from all the preceding abscissas.

\medskip
\noindent
(b) From the proof of part (a), the matrix $\mU$ is of size $q \times q$, and from equation \eqref{Eq:Projection} the projection of the first $q$ components of $K_m$ is given by the space $\Kop\left( \mU ,V_q(\mU) \right) = \Col V_q(\mU)$. The matrix $V_q(\mU)$ has rank $q-1$ and a zero in the first row, hence 
$
\Col V_q(\mU) = \Col \, \begin{\bracChoice}
    0 & I    
\end{\bracChoice}^T
$
where $I$ is the $q-1$ identity matrix.  Thus, the $(q-1)$-dimensional space $K_q$ is equal to the column space of a matrix of the form
\begin{align}\label{Eq:ColSpaceKq}
    K_q = \Col\begin{\bracChoice} 0 \\ I \\ L \end{\bracChoice} \, \quad \textrm{for some} \quad L \in \mathbb{R}^{s-q \times q-1} \, .
\end{align}
We now show that \eqref{Eq:ColSpaceKq} implies $L$ satisfies both \eqref{Eq:FirstSylvester} and \eqref{Eq:SecondSylvester}.  The fact that $L$ solves \eqref{Eq:SecondSylvester} follows since $K_q$ is $A$-invariant: for every $x$ there exists $z$ such that
\begin{align*}
    A\begin{\bracChoice} 0 \\ I \\ L \end{\bracChoice} x & = \begin{\bracChoice} 0 \\ I \\ L \end{\bracChoice} z \, .
\end{align*}
In components the equation can be written as $z = A_{22} x$ and $A_{32} x + A_{33} L x = Lz$ where the blocks of $A$ are defined in \eqref{Eq:BlockA}. Eliminating $z$ and recognizing that $x$ is arbitrary yields \eqref{Eq:SecondSylvester}. 

The fact that $L$ also satisfies \eqref{Eq:FirstSylvester} follows by expressing $\tau^{(j)}$ in terms of the matrix defined in \eqref{Eq:ColSpaceKq}, as follows. There exists $m_j$ such that
\begin{align}\label{Eq:BlockTau}
    \tau^{(j)} = R
    \begin{\bracChoice}
        0 \\
        A_{22} c_U^{j-1} - \tfrac{1}{j} c_U^j \\
        A_{32} c_U^{j-1} + A_{33} c_L^{j-1} - \tfrac{1}{j} c_L^j 
    \end{\bracChoice} 
    = \begin{\bracChoice}
        0 \\
        I \\
        L
    \end{\bracChoice} m_j \, , \quad (2 \leq j \leq q) \, . 
\end{align}
Again, the block components of $A$ and $c$ in \eqref{Eq:BlockTau} are outlined in \eqref{Eq:BlockA}. 
The first non-zero block in \eqref{Eq:BlockTau} defines $m_j$; substituing the solution of $m_j$ into the second block of \eqref{Eq:BlockTau} (and collecting the terms involing $c_U^{j-1}$ ) yields 
\begin{align}\label{Eq:Step1FirstEq}
    A_{33} c_L^{j-1} - \tfrac{1}{j} c_L^j &=  \underbrace{\left( L A_{22} - A_{32} \right)}_{= A_{33} L \; \mathrm{by} \; \eqref{Eq:SecondSylvester} } c_U^{j-1} - \tfrac{1}{j} L c_U^j \, , \quad 
   (2 \leq j \leq q) \, .
\end{align}
Combining the $q-1$ vector equations defined by \eqref{Eq:Step1FirstEq} into a single matrix equation for $L$ and right-multiplying through by $V_{U}^{-1}$ yields \eqref{Eq:FirstSylvester}. Note that $V_{U}$ is invertible because the first $q$ entries of $c_j$ are distinct. 

Lastly, $L$ is unique: for every fixed $A$ (with $c_U$ distinct), \eqref{Eq:FirstSylvester} defines a uniquely solvable Sylvester equation for $L$ since the eigenvalues of $A_{33}$ and $W_{U} V_{U}^{-1}$ are disjoint (cf.~\cite[Theorem VII.2.1]{Bhatia1997}). Namely, the eigenvalues of $A_{33}$ are all zero, while the eigenvalues of $W_{U} V_{U}^{-1}$ are all non-zero since $W_{U} V_{U}^{-1}$ can be written as the product of invertible matrices, e.g., $W_{U} V_{U}^{-1} = C_U V_{q,U} D V_{q,U}^{-1}$, where $C_U = \textrm{diag}( c_2, c_3, \ldots, c_q )$ and $D= \textrm{diag}(\tfrac{1}{2}, \tfrac{1}{3}, \ldots , \tfrac{1}{q} )$. 

\medskip
\noindent
(c) From expression \eqref{Eq:ColSpaceKq}, the column space orthogonal to $K_q$ (which contains both $Y$ and thus $b$) is given by the following matrix: 
\begin{equation*}
        b \in Y \subseteq \Col \begin{\bracChoice}
            1 &  0 \\
            0 & -L^T \\
            0 &  I 
        \end{\bracChoice} \, .
\end{equation*}
    
\medskip
\noindent
(d) Since the last column of $LA_{22}$ and first row of $A_{33} L$ both vanish, this implies that the top-right entry of $A_{32}$ vanish for \eqref{Eq:SecondSylvester} to be solvable; i.e. $a_{q+1,q}=0$.
\end{proof}

The proof of Theorem~\ref{thm:qm2} makes use of the calculation in the proof of Theorem~\ref{thm:qm1}(c). 

\begin{proof}(of Theorem~\ref{thm:qm2} from \cref{subsec:mainresults})
    If $(A,b)$ satisfy Theorem~\ref{thm:qm1}(a), then $c_2, \ldots, c_q$ are non-zero and distinct, so $V_{U}$ is invertible and \eqref{Eq:FirstSylvester} well-defined.  
    If $A$ and $L$ satisfy the Sylvester equations \eqref{Eq:FirstSylvester}--\eqref{Eq:SecondSylvester}, then $W := \Col \begin{\bracChoice}
        0 & I & L^T 
    \end{\bracChoice}^T$ is (i)~$A$-invariant (by \eqref{Eq:SecondSylvester}), (ii)~contains $\tau^{(2)}, \ldots, \tau^{(q)}$ since equation \eqref{Eq:BlockTau} follows from satisfying \eqref{Eq:FirstSylvester}--\eqref{Eq:SecondSylvester} (e.g., working backwards via \eqref{Eq:Step1FirstEq}), and (iii)~is orthogonal to $b$ since $b$ has the form in \eqref{Eq:bvector}.  Because the vectors $\tau^{(j)}$ for $j = 1, \ldots, q$ lie in an $A-$invariant space orthogonal to $b$, the scheme has WSO of at least $q$.         
\end{proof}

\begin{remark}
    Schemes satisfying the sufficient conditions outlined by Theorem~\ref{thm:qm2} may have WSO greater than $q$ or may be reducible. \myremarkend
\end{remark}

\section{Conclusions and outlook} \label{sec:conclusion}
In this work  we have taken the concept of weak stage order (WSO) that was previously developed with respect to implicit methods, and applied it to \erk{} methods.
In so doing, we have developed a technique for overcoming order reduction when using \erk{} methods on problems with time-dependent boundary conditions, and provided optimized methods that achieve this goal.

We have established that \erk{} schemes can be constructed to have high WSO and classical order, and determined a lower bound on the number of stages required (Theorem~\ref{thm:qm1}); remarkably, this bound is sharp in almost all cases up to order 5 and WSO 5. Another crucial theoretical result is that there is no upper bound on the achievable order: any integer $p = q$ is achievable via a concrete \erk{} scheme, albeit one with a sizeable number of stages, $s = p^2$. These results bracket the number of stages needed for a scheme to achieve prescribed values of $p$ and $q$.

We have also shown that the WSO property has significant implications for numerical stability. Just as requiring classical order $p$ fixes the first $p+1$ coefficients of the stability function, so requiring WSO $q$ fixes (to zero) the highest $q-1$ coefficients.  For schemes with the minimum required number of stages ($s=p+q-1$), this completely determines the stability polynomial. We have also shown that methods with WSO $q>1$ cannot be strong stability preserving, since they must have at least one negative Butcher coefficient.

The construction of specific \erk{} schemes with high WSO is facilitated by their characterization
in terms of two Sylvester equations, given in Theorems~\ref{thm:qm2}--\ref{thm:qm3}.
This simultaneously ensures satisfaction of the WSO conditions and many of the classical
order conditions, and has enabled the construction of methods with exact (rational) coefficients
and small (optimal or near-optimal) error coefficients for $p, q \le 5$.
These methods can easily be used in existing application codes simply by replacing
the Butcher coefficients; this is much simpler than other remedies for order reduction, such as modified boundary conditions.  For example, we have already
implemented these methods in the Clawpack software as part of our numerical tests.

The new methods seem at a glance to have an increased cost due to the increased number of stages relative to traditional RK methods. However, the cost of these additional stages can be mostly avoided in many relevant situations (see \cref{sec:implementation}).  And even if not, based on our numerical tests the improvement in accuracy may more than offset the increased cost for applications where order reduction is a concern, but this is a topic that requires further study. Finally, the parallel iterated schemes of \cref{sec:high} are potentially as efficient as traditional methods if their potential for concurrency can be leveraged.

The foregoing discussion suggests that the use of a high WSO scheme is worthwhile whenever order reduction may be present in a numerical simulation.  But recognizing in advance the circumstances where order reduction will be observed is, in our experience, not a simple matter. The theory indicates that it may appear (among other situations) whenever time-dependent Dirichlet boundary conditions are imposed, but in our experience it does not always manifest as a dominant source of error. The terms responsible for order reduction, which depend on the equation, the semi-discretization, the initial and boundary data, and the mesh size, may in practical situations be dominated by other sources of error. This explains to some extent why order reduction is not always considered a practical concern, even in situations where it could theoretically arise.

An important limitation of the WSO theory is that it guarantees the avoidance of order reduction only for linear problems.  In tests on nonlinear problems, we have observed that methods with WSO 3 or higher exhibit third-order convergence, while low WSO methods show second-order convergence.  Thus high WSO schemes still yield a notable benefit on nonlinear problems.

Future work could include the development of improved schemes, by seeking schemes
with even smaller error coefficients, developing embedded error estimators or dense output
formulas, and finding methods with larger $p, q$.  The theory developed here
will be useful in these efforts.

\bibliographystyle{siamplain}
\bibliography{refs}

\appendix

\ifarxiv
\section{Reducibility in \rk{} schemes as invariant subspaces}\label{appendix:Sred}
This appendix provides details on an alternative condition for \rk{} methods to be S-reducible in the form of an $A$-invariant subspace that we employ in \cref{subsec:proofqm}. This equivalent condition appears in the literature in the middle of a proof as \cite[eq.~12.31, Section~IV]{wanner1996solving},
however, we include extra details here.

The textbook definition of S-reducible is as follows.
\begin{definition}\label{def:Sred} (S-reducible, Def.~12.17 in \cite{wanner1996solving}) A \rk{} method is S-reducible, if for some partition $(S_1, \ldots, S_{r})$ of $\{1, \ldots, s\}$ $(r < s)$ we have for all $\ell$ and $m$
    \begin{align}\label{Eq:OG_Def_Sred}
        \sum_{k \in S_m} a_{ik} = \sum_{k \in S_m} a_{jk} \quad \mathrm{if} \quad i,j \in S_{\ell} \;.
    \end{align}
\end{definition}

\begin{lemma} Definitions \ref{def:Sred} and \ref{Def:altSred} are equivalent.    
\end{lemma}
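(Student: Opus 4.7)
The plan is to show the equivalence by computing the entries of $AS$ and $SB$ directly and recognizing that the matrix equation $AS = SB$ encodes exactly the sum condition \eqref{Eq:OG_Def_Sred}. First I would compute $(AS)_{ij} = \sum_{k=1}^s a_{ik} s_{kj} = \sum_{k \in S_j} a_{ik}$, which uses only the definition of the partition matrix. Next I would compute $(SB)_{ij} = \sum_{k=1}^r s_{ik} b_{kj} = b_{\ell(i), j}$, where $\ell(i)$ denotes the unique index such that $i \in S_{\ell(i)}$ (this uses that the sets $S_1, \ldots, S_r$ form a partition, so the indicator vectors have disjoint support).

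For the forward direction ($\ref{Def:altSred} \Rightarrow \ref{def:Sred}$), I would assume there exists $B \in \mathbb{R}^{r \times r}$ with $AS = SB$ and read off, for any two indices $i, j \in S_\ell$, that
\begin{equation*}
\sum_{k \in S_m} a_{ik} = (AS)_{im} = b_{\ell m} = (AS)_{jm} = \sum_{k \in S_m} a_{jk},
\end{equation*}
recovering \eqref{Eq:OG_Def_Sred}.

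For the reverse direction ($\ref{def:Sred} \Rightarrow \ref{Def:altSred}$), I would use the hypothesis \eqref{Eq:OG_Def_Sred} to define $B \in \mathbb{R}^{r \times r}$ by choosing any representative $i_\ell \in S_\ell$ and setting $b_{\ell m} := \sum_{k \in S_m} a_{i_\ell, k}$. The hypothesis guarantees this quantity is independent of the chosen representative, so $B$ is well-defined, and by construction $(AS)_{im} = b_{\ell(i), m} = (SB)_{im}$ for every $i$ and $m$, giving $AS = SB$.

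The main (minor) obstacle is bookkeeping: one must be careful that the partition matrix $S$ has pairwise orthogonal columns with disjoint supports, which is what makes the product $SB$ collapse to $b_{\ell(i), j}$, and that the non-triviality assumption ($r < s$) transfers verbatim between the two formulations. There is no genuine analytic difficulty; the statement is essentially a dictionary between the indexed sum formulation and the matrix-invariance formulation.
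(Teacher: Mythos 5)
Your proposal is correct and takes essentially the same route as the paper: both arguments observe that the condition $\sum_{k\in S_m} a_{ik}=\sum_{k\in S_m} a_{jk}$ for $i,j\in S_\ell$ says precisely that $As_m$ is constant on each block of the partition, i.e.\ that $As_m$ lies in the column space of $S$, which is the content of $AS=SB$. The paper phrases this via the vectors $e_i-e_j$ spanning the mean-zero vectors supported on $S_\ell$, while you compute the entries of $AS$ and $SB$ directly, but the two are the same dictionary.
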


\begin{proof}
    Given a partition, by definition of the $s_j$ vectors in the associated partition matrix $S$, the expression \eqref{Eq:OG_Def_Sred} is equivalent to: For all $\ell$ and $m$
    \begin{align}\label{Eq:SRed_line1}
         (e_i - e_j)^T (A s_m) = 0 \quad 
        &\textrm{if} \quad \; i,j \in S_\ell\;.
    \end{align}        
    Collectively, the vectors $\{ (e_i - e_j)\}_{i,j}$ for all $i, j \in S_{\ell}$ form a basis for mean-zero vectors with support in $S_{\ell}$. Thus, \eqref{Eq:SRed_line1} holds if and only if, for each $m$ the vector $A s_m$ is constant on each set $S_\ell$, or more specifically: 
    \begin{align}\label{Eq:SRed_line3}
        A s_m \in \mathrm{col} \begin{\bracChoice}
        s_1 \; | \; s_2 \; | \; \ldots \; | \; s_r 
    \end{\bracChoice} \; , 
    \end{align}
    Condition \eqref{Eq:SRed_line3} is equivalent \eqref{Eq:AinvPart}.
\end{proof}
\begin{remark}
    For S-reducible schemes, the $B$ matrix in \eqref{Eq:AinvPart} is not only completely determined by the partition, 
    \begin{align*}
        \Bmat_{ij} = \sum_{k \in S_j} a_{\ell,k} \, , \quad \textrm{for each} \; \ell \in S_i \, ,
    \end{align*}
    but also defines the Butcher matrix $A^* := \Bmat$ of the smaller equivalent \rk{} scheme (cf. Equation~(12.24) in \cite{wanner1996solving}). \myremarkend
\end{remark}
\fi

\section{Fourth and fifth order methods}
\label{appendix:methods}

\subsection{(6,4,3) Method} \label{sec:ERK(6,4,3)}
\begin{equation*}
    \begin{butchertableau}{c|cccccc}
        0 & 0 & 0 & 0 & 0 & 0 & 0 \\
        1 & 1 & 0 & 0 & 0 & 0 & 0 \\
        \frac{1}{7} & \frac{461}{3920} & \frac{99}{3920} & 0 & 0 & 0 & 0 \\
        \frac{8}{11} & \frac{314}{605} & \frac{126}{605} & 0 & 0 & 0 & 0 \\
        \frac{5}{9} & \frac{13193}{197316} & \frac{39332}{443961} & \frac{86632}{190269} & -\frac{294151}{5327532} & 0 & 0 \\
        \frac{4}{5} & \frac{884721}{773750} & \frac{52291}{696375} & -\frac{155381744}{135793125} & -\frac{53297233}{355151250} & \frac{74881422}{85499375} & 0 \\ \hline
        \text{} & \frac{113}{2880} & \frac{7}{1296} & \frac{91238}{363285} & -\frac{1478741}{1321920} & \frac{147987}{194480} & \frac{77375}{72864}
    \end{butchertableau}
\end{equation*}

\subsection{(7,4,4) Method} \label{sec:ERK(7,4,4)}
\begin{small}
\begin{equation*}
    \begin{butchertableau}{rl|rl}
        a_{2,1} & \frac{13}{15} & a_{7,4} & \frac{615685898929080}{887641386333269} \\
        a_{3,1} & \frac{354503406167294455217584527356969321310499849}{679624939387359702842360408541392160411699600} & a_{7,5} & -\frac{88}{41} \\
        a_{3,2} & \frac{29553225679453489752042741666497760730650643}{2038874818162079108527081225624176481235098800} & a_{7,6} & \frac{63}{79} \\
        a_{4,1} & \frac{599677}{612720} & b_1 & -\frac{27983058641859756462867613}{8486495976646364788361250} \\
        a_{4,2} & \frac{1}{185} & b_2 & \frac{266859550993073190375211}{43133823812456533406250} \\
        a_{4,3} & \frac{1}{69} & b_3 & -\frac{3642903731392259905073408}{613543193666469780107625} \\
        a_{5,1} & \frac{11942118300581357822967470312387413892866711}{90616658584981293712314721138852288054893280} & b_4 & -\frac{59466320887669359732170224}{16752980798131655841946875} \\
        a_{5,2} & \frac{79816622789357424004900970571545142906303}{18123331716996258742462944227770457610978656} & b_5 & \frac{22530099787083474288594398}{3662271198716324657203125} \\
        a_{5,3} & \frac{10939005}{8358742409} & b_6 & \frac{13086932957294488}{71277904341826875} \\
        a_{5,4} & 0 & b_7 & \frac{12256178974}{9710853075} \\
        a_{6,1} & -\frac{2057331211140587771882165942948945576060485224020471}{5094460906663329618583273674295283629198217174096496} & c_1 & 0 \\
        a_{6,2} & \frac{37580055896186727391837634951840677945750522481251}{448734898514386546714588872865387677183262652640624} & c_2 & \frac{13}{15} \\
        a_{6,3} & -\frac{235459427251516205060}{1472801902839731775141} & c_3 & \frac{193}{360} \\
        a_{6,4} & -\frac{787608360}{15627214069} & c_4 & \frac{719}{720} \\
        a_{6,5} & \frac{24}{43} & c_5 & \frac{11}{80} \\
        a_{7,1} & \frac{793706393429237444430333112845341360638504851726921024780703}{806700576848993242482064062984309812448909584075544854292960} & c_6 & \frac{1}{36} \\
        a_{7,2} & -\frac{33849235109708152171969081938954415033838967121633968102863}{23685509164823635789628823956361427999363493832960729746080} & c_7 & \frac{193}{240} \\
        a_{7,3} & \frac{1821188984566562706805723220601}{956185881514873346828934914081}
    \end{butchertableau}
\end{equation*}
\end{small}

\subsection{(8,5,4) Method} \label{sec:ERK(8,5,4)}
\begin{small}
\begin{equation*}
\begin{butchertableau}{rl|rl|rl}
a_{2,1} & \frac{2}{31} & a_{7,3} & -\frac{271390788610093}{44561002480000} & b_7 & -\frac{42525800}{8688043} \\
a_{3,1} & \frac{8}{39} & a_{7,4} & \frac{16919854802127127}{33068912912100000} & b_8 & \frac{190120171223750}{63572266692433} \\
a_{3,2} & 0 & a_{7,5} & \frac{918241790299}{2569461804000} & c_1 & 0 \\
a_{4,1} & \frac{15}{38} & a_{7,6} & -\frac{1}{8} & c_2 & \frac{2}{31} \\
a_{4,2} & 0 & a_{8,1} & -\frac{69373518431251442108053395141546348749}{4382652560085449761027489727918400000} & c_3 & \frac{8}{39} \\
a_{4,3} & 0 & a_{8,2} & \frac{28436161533578442493717377903973791583}{1122666693846436666675352841982200000} & c_4 & \frac{15}{38} \\
a_{5,1} & \frac{23}{38} & a_{8,3} & -\frac{5846309065854115413909270194602947869}{606644216141135157002900448063680000} & c_5 & \frac{23}{38} \\
a_{5,2} & 0 & a_{8,4} & \frac{6129203519106929754603252009272053}{11862175903109203056563899370081250} & c_6 & \frac{31}{39} \\
a_{5,3} & 0 & a_{8,5} & \frac{242980026698914693640761833099573847}{314274501092549835332737438438856250} & c_7 & \frac{29}{31} \\
a_{5,4} & 0 & a_{8,6} & -\frac{38588365882306831}{818781973666952750} & c_8 & 1 \\
a_{6,1} & -\frac{281846119171}{64200240000} & a_{8,7} & -\frac{508578133539464}{4816364550982075} & \text{} & \text{} \\
a_{6,2} & \frac{289705767137}{45358567000} & b_1 & -\frac{13932812614910970806212030308137}{1494246680966212236480728656800} & \text{} & \text{} \\
a_{6,3} & -\frac{779567154093}{524247088000} & b_2 & \frac{442315248050515865700725458450027}{23731641831739396945145366137800} & \text{} & \text{} \\
a_{6,4} & \frac{199824989}{614863125} & b_3 & -\frac{21619621692735791984774655801338457}{1572963107476970769686133552792800} & \text{} & \text{} \\
a_{6,5} & -\frac{1}{25} & b_4 & \frac{4931046639398139760440943293895907}{887688100270302681290608525794300} & \text{} & \text{} \\
a_{7,1} & -\frac{5647052528401825871}{514607937760800000} & b_5 & -\frac{808732636620048337464280245511529}{1567883987541272156723519232078580} & \text{} & \text{} \\
a_{7,2} & \frac{80442150849469599005477}{4661884215626994720000} & b_6 & \frac{52162695}{22722574} & \text{} & \text{} \\
\end{butchertableau}
\end{equation*}
\end{small}

\subsection{(9,5,5) Method} \label{sec:ERK(9,5,5)}
\begin{small}
\begin{equation*}
    \begin{butchertableau}{rl|rl|rl|rl}
        a_{2,1} & \frac{1}{19} & a_{6,5} & 0 & a_{9,1} & \frac{544015925591990906117739018863}{21097279127167116142731264000} & b_7 & -\frac{13778944}{1751475} \\
        a_{3,1} & \frac{1}{6} & a_{7,1} & \frac{11448031}{2850816} & a_{9,2} & -\frac{51819957177912933732533469147783191}{1292529408768612025127952939417600} & b_8 & \frac{92889088}{11941875} \\
        a_{3,2} & 0 & a_{7,2} & -\frac{67411795275}{16590798848} & a_{9,3} & \frac{15141148893501140337719772533}{769541606770966638202880000} & b_9 & -\frac{714103988224}{149255126145} \\
        a_{4,1} & \frac{5}{16} & a_{7,3} & \frac{51073011}{43237376} & a_{9,4} & -\frac{22062343808701233885761491}{5740046662014404900523000} & c_1 & 0 \\
        a_{4,2} & 0 & a_{7,4} & -\frac{23353}{64148} & a_{9,5} & -\frac{180818957612953115541011736739}{146721986657116762265358336000} & c_2 & \frac{1}{19} \\
        a_{4,3} & 0 & a_{7,5} & \frac{583825}{8077312} & a_{9,6} & \frac{18393837528018836258241002593}{22366927394951953576613895000} & c_3 & \frac{1}{6} \\
        a_{5,1} & \frac{1}{2} & a_{7,6} & -\frac{1}{116} & a_{9,7} & -\frac{14372715851}{701966192290} & c_4 & \frac{5}{16} \\
        a_{5,2} & 0 & a_{8,1} & \frac{30521441823091}{1986340257792} & a_{9,8} & -\frac{3316780581}{34682124125} & c_5 & \frac{1}{2} \\
        a_{5,3} & 0 & a_{8,2} & -\frac{745932230071621375}{35792226257928192} & b_1 & \frac{201919428075343316424206867}{7205146638186855485778750} & c_6 & \frac{11}{16} \\
        a_{5,4} & 0 & a_{8,3} & \frac{42324456085}{5966757888} & b_2 & -\frac{979811820279525173317561445351}{23232888464237446713644747250} & c_7 & \frac{5}{6} \\
        a_{6,1} & \frac{11}{16} & a_{8,4} & \frac{775674925}{6453417096} & b_3 & -\frac{659616477161155066954978}{262813990730721440278125} & c_8 & \frac{16}{17} \\
        a_{6,2} & 0 & a_{8,5} & -\frac{38065236125}{28020473856} & b_4 & \frac{10343523856053877739219144704}{232857239079584284108576875} & c_9 & 1 \\
        a_{6,3} & 0 & a_{8,6} & \frac{18388001255}{24775053336} & b_5 & -\frac{2224588357354685208355760476}{50108519801935858605643125} & \text{} & \text{} \\
        a_{6,4} & 0 & a_{8,7} & -\frac{25}{138} & b_6 & \frac{704220346724742597999572733952}{31288349276326419946994221875} & \text{} & \text{}
    \end{butchertableau}
\end{equation*}
\end{small}

\end{document}